\documentclass[12pt]{article} 
\usepackage{cmap}
\usepackage{ucs}
\usepackage[utf8]{inputenc}
\usepackage[top=2.5cm,bottom=2.5cm,left=2.5cm,right=2.5cm]{geometry}
\usepackage{amsfonts,amsmath,amsthm,amscd,amssymb,latexsym}
\usepackage{mathtools}
\usepackage[english]{babel}
\usepackage{tikz}
\usepackage{subcaption}
\usetikzlibrary{positioning}
\usetikzlibrary{intersections}
\usetikzlibrary{arrows.meta, decorations.markings}
\usetikzlibrary{shapes.geometric, arrows}
\usepackage{float}
\usepackage{csquotes}
\usepackage{enumitem}
\usepackage{comment}
\usepackage{adjustbox}
\usepackage[labelfont=bf, labelsep=period]{caption}
\usepackage[colorinlistoftodos]{todonotes}
\usepackage[output-decimal-marker={,}]{siunitx}
\usepackage{listings}
\usepackage{xcolor}
\usepackage{array}
\usepackage{anyfontsize}
\usepackage{ifthen}
\usepackage{authblk}
\usepackage{enumitem}
\usepackage{relsize}

\mathtoolsset{showonlyrefs=true}

\usepackage[pdftex,colorlinks,linkcolor=blue,citecolor=red,urlcolor=blue,hyperindex,plainpages=false,bookmarksopen,bookmarksnumbered,unicode]{hyperref}

\usepackage{attachfile}

\usepackage[table]{xcolor}

\definecolor{nocellthree}{RGB}{245,220,220}   
\definecolor{nocelltwo}{RGB}{235,180,180}   
\definecolor{nocellone}{RGB}{215,140,140} 
\definecolor{nocellfour}{RGB}{245, 143, 225} 

\newcommand{\No}[1]{%
  \ifcase#1\relax
  \or\cellcolor{nocellone}No(1)%
  \or\cellcolor{nocelltwo}No(2)%
  \or\cellcolor{nocellthree}No(3)%
  \or\cellcolor{nocellfour}No(4)%
  \fi
}

\definecolor {processblue}{cmyk}{0.96,0,0,0}
\definecolor{codegreen}{rgb}{0,0.6,0}
\definecolor{codegray}{rgb}{0.5,0.5,0.5}
\definecolor{codepurple}{rgb}{0.58,0,0.82}
\definecolor{backcolour}{rgb}{0.95,0.95,0.92}

\makeatletter 
\newcount\SOUL@minus
\makeatother  

\lstdefinestyle{mystyle}{
	backgroundcolor=\color{backcolour},   
	commentstyle=\color{codegreen},
	keywordstyle=\color{blue},
	numberstyle=\tiny\color{codegray},
	stringstyle=\color{codepurple},
	basicstyle=\ttfamily\footnotesize,
	breakatwhitespace=false,         
	breaklines=true,                 
	captionpos=b,                    
	keepspaces=true,                 
	numbers=left,                    
	numbersep=5pt,                  
	showspaces=false,                
	showstringspaces=false,
	showtabs=false,                  
	tabsize=2
}

\newtheorem{theorem}{Theorem}[section]
\newtheorem{lemma}[theorem]{Lemma}
\newtheorem{proposition}[theorem]{Proposition}
\newtheorem{corollary}[theorem]{Corollary}

\newtheorem{observation}[theorem]{Observation}

\theoremstyle{definition}
\newtheorem{definition}[theorem]{Definition}

\begin{document}
\large
\title{\vspace{-2cm} Regular $K_3$-regular graphs}
	
\author[1, 2]{Artem Hak \thanks{Corresponding author: artikgak@ukr.net}}
\author[2]{Sergiy Kozerenko}
\author[3]{Denys Lohvynov}
\author[2, 4]{Yurii Yarosh}

\affil[1]{National University of Kyiv-Mohyla Academy, 2 Skovorody Str., 04070 Kyiv, Ukraine}

\affil[2]{Kyiv School of Economics, 3 Mykoly Shpaka Str., 03113 Kyiv, Ukraine}

\affil[3]{National Technical University of Ukraine
``Igor Sikorsky Kyiv Polytechnic Institute'', 37 Beresteiskyi Ave., 03056 Kyiv, Ukraine}

\affil[4]{Kyiv Academic University, 36 Vernadsky Blvd., 03142 Kyiv, Ukraine}

	\date{\vspace{-5ex}}
	
	\maketitle
	
	\begin{abstract}
    We study graphs that are simultaneously regular with respect to the ordinary vertex degree and regular with respect to the triangle degree, that is, the number of triangles containing a given vertex. We call such graphs regular $K_3$-regular. We investigate the (non-)existence of regular $K_3$-regular graphs with prescribed parameters $(r_2,r_3)$, where $r_2$ is the vertex degree and $r_3$ is the triangle degree. General bounds relating vertex and edge triangle degrees are derived, and non-existence results are established for broad ranges of these parameters. Special attention is paid to Tur\'an graphs, for which we establish uniqueness results for certain parameters. The paper concludes with a summary of admissible parameters and several open problems.
	\end{abstract}
	{ \small
	{\bf Keywords:} vertex degree; triangle degree; regular graph; Tur\'an graph.\\
	{\bf MSC 2020:} 05C07, 05C35, 05C99.
	}

\section{Introduction}

A graph is called regular if its vertices have the same degree. A simple application of the pigeonhole principle implies that any graph with at least two vertices has two vertices with the same degree. Thus, there are no ``irregular'' graphs. Nevertheless, in order to approach a sensible definition of irregularity for graphs, Chartrand, Erd\H{o}s, and Oellermann~\cite{Char-Erd-Oell:88} proposed to use the notion of $F$-degree defined in the work~\cite{Char:87}. There, for a fixed graph $F$, the $F$-degree of a vertex $u$ from a graph $G$ is the number of subgraphs of $G$ isomorphic to $F$ to which $u$ belongs. This way, the standard vertex degree is just the $K_{2}$-degree.

In contrast, for other graphs $F\neq K_{2}$, there can exist $F$-irregular graphs (i.e., graphs $G$ with all vertices having pairwise distinct $F$-degrees). 
For the smallest possible $K_{3}$-irregular graph, see the work~\cite{dist-triangle:2024}. In~\cite{Char-Erd-Oell:88}, the question of whether there exists a regular $K_{3}$-irregular graph was posed. This problem remained open for several decades until, in 2024, Stevanovi\'c et al.~\cite{reg-triangle:24} found a $10$-regular $K_{3}$-irregular graph. More recently, the preprint~\cite{Hak:25} gives an example of a $9$-regular $K_{3}$-irregular graph obtained using genetic algorithm techniques. It is also shown that there are no such graphs for regularities at most $7$, with the case of regularity $8$ constituting the most interesting yet open case (again, see~\cite{Hak:25} for the details). For other aspects of irregularity in graphs, we refer to~\cite{Chartrand2016}.

In this paper, we consider regular $K_{3}$-regular graphs. Our motivation is to further develop the understanding of different regularities a graph can exhibit. The main focus is on the (non-)existence of regular $K_{3}$-regular graphs for the given parameters of regularity. 

A related framework was introduced in~\cite{Jajcay2025}, where the authors study 
vertex-girth-regular graphs, namely $\operatorname{vgr}(n;k;g;\lambda)$-graph is 
a $k$-regular graph of order $n$ and girth $g$ in which every vertex belongs to exactly $\lambda$ 
cycles of length $g$. Thus, the graph is regular not only with respect to vertex degrees but also 
with respect to the number of shortest cycles containing each vertex. Several non-existence results were proved for odd $g \ge 7$ and even $g \ge 4$ (see~\cite[Theorems 17, 18]{Jajcay2025}). For the cases $g=3$ and $g=5$, the authors formulate open questions asking whether the non-existence results from the mentioned theorems hold for these values of girth as well. The present paper focuses on regular $K_3$-regular graphs, which corresponds precisely to the case $g=3$.

This class of graphs has also already appeared in the literature under a different name. In~\cite{Caro2024}, regular $K_3$-regular graphs with parameters $(r_2,r_3)$ are called $(r_2,r_3)$-constant graphs. Non-existence result for $r_3 = {r_2 \choose 2} - k$ with $k\in\mathbb{N}$ and $3k\le r_2$ was proven (see~\cite[Theorem 14]{Caro2024}). In~\cite{CaroMifsud2025}, the existence of planar $(r_2, r_3)$-constant graphs and circulant $(r_2, r_3)$-constant graphs is studied. In addition, the authors completely answer the question of the existence of $(r_2, r_3)$-constant graphs for $r_2\le 6$, $r_3\le {r_2 \choose 2}$.

Therefore, some basic constraints and structural properties of such graphs are already known; one of them is the following.

\begin{observation}
For every $r_3 \in \mathbb{N} \cup \{0\}$, there exists an integer $r_2^*\in \mathbb{N} \cup \{0\}$ such that
\begin{enumerate}[label={$(\arabic*)$}]
\item for all $r_2 \geq r_2^*$, there exists a graph with parameters $(r_2, r_3)$;
\item for all $r_2 < r_2^*$, no graph with parameters $(r_2, r_3)$ exists.
\end{enumerate}
\end{observation}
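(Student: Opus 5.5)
The plan is to show that, for fixed $r_3$, the set $S(r_3)=\{r_2 : \text{a graph with parameters } (r_2,r_3) \text{ exists}\}$ is nonempty and closed upwards. Then $r_2^*=\min S(r_3)$ satisfies both $(1)$ and $(2)$. Both facts come from one tool, the Cartesian product $G\,\square\, H$. In this product, $(g,h)$ is adjacent to $(g',h')$ iff either $g=g'$ and $hh'\in E(H)$, or $h=h'$ and $gg'\in E(G)$.

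\textbf{Key lemma.} Every triangle of $G\,\square\, H$ lies entirely inside a single $G$-fiber or a single $H$-fiber. To see this, classify each edge of a triangle as a $G$-edge (it changes the first coordinate) or an $H$-edge (it changes the second). Suppose a triangle had both kinds. Going around the triangle, the second coordinate would change along exactly one or two edges. If it changes along exactly one edge, it cannot return to its starting value. If it changes along exactly two, then the first coordinate changes along only one edge, and the same contradiction applies to it. Hence all three edges are of the same kind, and the triangle lies in one fiber. Consequently, in $G\,\square\, H$ we have
\[
\deg(g,h)=\deg_G(g)+\deg_H(h), \qquad t(g,h)=t_G(g)+t_H(h),
\]
where $t$ denotes triangle degree. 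In particular, if $G$ has parameters $(a,b)$ and $H$ has parameters $(c,d)$, then $G\,\square\, H$ has parameters $(a+c,\,b+d)$.

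\textbf{Upward closure.} Let $G$ have parameters $(r_2,r_3)$. Then $G\,\square\, K_2$, which is two copies of $G$ joined by the perfect matching $u\leftrightarrow u'$, has parameters $(r_2+1,\,r_3+0)=(r_2+1,r_3)$, since $K_2$ has parameters $(1,0)$. By induction, $S(r_3)$ contains every integer $\ge r_2$ whenever it contains $r_2$.

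\textbf{Nonemptiness.} For $r_3=0$, the graph $K_1$ has parameters $(0,0)$, so $r_2^*=0$. For $r_3=k\ge 1$, take the Hamming graph $K_3^{\square k}=K_3\,\square\,\cdots\,\square\, K_3$. Since $K_3$ has parameters $(2,1)$, the key lemma applied $k-1$ times gives parameters $(2k,k)$. Thus $S(r_3)\neq\emptyset$, and we define $r_2^*=\min S(r_3)$.

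The only step needing real care is the fiber lemma for triangles in Cartesian products. Everything else is bookkeeping. No earlier results from the paper are required.
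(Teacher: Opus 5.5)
Your proof is correct and follows the paper's argument: additivity of degree and $K_3$-degree under the Cartesian product, with $G \square K_2$ giving upward closure in $r_2$ and a product of copies of $K_3$ giving existence. You additionally prove the fiber lemma that the paper cites as well known, while the paper uses $K_n \square K_3 \square \cdots \square K_3$ to also get $r_2^* = O(\sqrt{r_3})$, which the observation itself does not need.
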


This observation is a simple corollary of a trivial fact that $K_2$ and $K_3$ are regular $K_3$-regular, and the well-known fact that the degree and $K_3$-degree of a vertex are both additive under the Cartesian product. As such, we see that $r_2^* \leq O(\sqrt{r_3})$ (consider $K_n \square K_3 \square \ldots \square K_3$). Also, it is worth mentioning that authors in \cite{Caro2024} improved this bound to $r_2^* \leq f^{-1}(r_3)$, where $f(x) = \frac{x^2}{2}-5x^{\frac{3}{2}}$ (see~\cite[Theorem 14]{Caro2024}).

The paper is organised as follows. In Section~\ref{sect-defs-prelim}, we give all the necessary definitions that will be used in the paper. Section~\ref{sect-bounds} is devoted to establishing main technical bounds for vertex and edge $K_{3}$-degrees in regular $K_{3}$-regular graphs. The important results on the non-existence of such graphs are provided by Proposition~\ref{Cr:noDown} and Theorem~\ref{Pr:noDown}.  Section~\ref{sect-Turan} contains results on a well-known family of Tur\'an graphs, which are proved to be an important class of regular $K_{3}$-regular graphs. In particular, we present a characterisation of Tur\'an graphs for certain parameters in Theorem~\ref{cor:5.3} and in Theorem~\ref{th:K333} (which also demonstrates non-existence). We conclude the paper with Section~\ref{sect-4}, presenting several open questions and research directions on regular $K_{3}$-regular graphs. Finally, Appendix~\ref{appendix} contains some examples of regular $K_{3}$-regular graphs for small regularities, and a table which summarises our results on the existence of these graphs.

\section{Main definitions}\label{sect-defs-prelim}

A graph is an ordered pair $G = (V, E)$, where $V = V(G)$ is the set of its \textit{vertices} and $E = E(G) \subset \binom{V}{2}$ is the set of its \textit{edges}. All the graphs considered in this paper are undirected, simple and finite. Also, for a pair of vertices $u, v \in V(G)$, the edge $\{u, v\}$ will be denoted as $uv$. By $\overline{G}$, we denote the \textit{complement} of a graph $G$. The complete graph with $n$ vertices is denoted by~$K_n$.

The \textit{open neighbourhood} of a vertex $v$ in a graph $G$ is the set of all its adjacent vertices: $N(v) = \{u \in V(G): uv \in E(G)\}$. The \textit{closed neighbourhood} of $v$ is the set $N[v] = N(v) \cup \{v\}$. The \textit{degree} of $v$ in $G$ is the number $\deg(v) = |N(v)|$. A vertex $v \in V(G)$ is an \textit{isolated vertex} provided $\deg(v) = 0$. The graph $G$ is said to be \textit{regular} if all its vertices have the same degree, which is called the \textit{regularity} of $G$. 

Two graphs $G$ and $H$ are called \textit{isomorphic} if there is an \textit{isomorphism} between them, that is, a bijection $f\colon V(G) \rightarrow V(H)$ such that $uv \in E(G)$ if and only if $f(u)f(v) \in E(G)$.

Given vertices $v_1, \ldots, v_k \in V(G)$, by $G[v_1, \ldots, v_k]$ we denote the subgraph induced by the union of their closed neighbourhoods $\bigcup_{i=1}^k N[v_i]$, and by $G(v_1, \ldots, v_k)$ we denote the subgraph induced by the union of their open neighbourhoods $\bigcup_{i=1}^k N(v_i)$.

The \textit{Cartesian product} of two graphs $G_1$ and $G_2$ is the graph $G_1 \square G_2$ with $V(G_1 \square G_2) = V(G_1)\times V(G_2)$, and two vertices $(u, v)$ and $(u', v')$ being adjacent if and only if either $u=u'$ and $vv' \in E(G_2)$, or $v=v'$ and $uu' \in E(G_1)$.

\begin{definition}[\cite{Char-Erd-Oell:88, Char:87}]
For a given graph $F$, the \textit{$F$-degree} of a vertex $v$ in~$G$ is the number $F \deg(v)$ of subgraphs of $G$, isomorphic to $F$, to which $v$ belongs.
\end{definition}

Note that the ordinary degree of a vertex is exactly its $K_2$-degree. Also, it is readily seen that $K_3\deg(v)=|{N(v)\choose 2}\cap E(G)|$.

\begin{definition}
For a given graph $F$, the \textit{$F$-degree} of an edge $e\in E(G)$ is the number $F \deg(e)$ of subgraphs of $G$, isomorphic to $F$, to which $e$ belongs.
\end{definition}

Note that $K_3\deg(uv)  = |N(u)\cap N(v)|$ for any edge $uv \in E(G)$. 

\begin{definition}
A graph is called $K_3$-regular if all its vertices have the same $K_3$-degree.  
\end{definition}

Throughout the paper, we focus on graphs that are both regular and $K_3$\nobreakdash-regular. If $G$ is such a graph, we denote by $r_2$ the common $K_2$-degree (i.e., the ordinary degree) of all its vertices, and by $r_3$ the common $K_3$-degree of all its vertices. We say that $G$ \textit{has parameters} $(r_2, r_3)$ if $G$ is regular with degree $r_2$ and $K_3$-regular with $K_3$-degree $r_3$.

\begin{definition}
Tur\'an graphs are complete multipartite graphs built by partitioning a set of~$n$ vertices into $r$ subsets as equally as possible and connecting every pair of vertices that lie in different parts.    
\end{definition}

 In this paper, we restrict ourselves to the case where $r \mid n$, so that all parts have equal size (also known as complete multipartite graphs). These graphs were introduced in Tur\'an's classical 1941 paper~\cite{Turan1941}. We shall denote these graphs by $\operatorname{Turan}(n, r)$.

Assume that $r \mid n$, then Tur\'an graphs are regular and, moreover, $K_3$-regular. For $r\geq 3$, we can explicitly calculate the regularity parameters
\[r_2=\frac{n}{r}(r-1),
\qquad
r_3={ r-1 \choose 2}\cdot \left(\frac{n}{r}\right)^2.\]

\section{Establishing bounds}\label{sect-bounds}

In this section, we establish fundamental bounds on the parameters of $K_3$-regular graphs. We derive several direct estimates and develop key technical lemmas relating edge $K_3$-degrees to vertex degrees. These results form the main tools for the non-existence theorems proved in the subsequent sections.

The following lemma gives a trivial but useful upper bound on the parameter $r_3$ in terms of $r_2$. 

\begin{lemma}\label{K3_upper_bound}
For every graph with parameters $(r_2, r_3)$, the following upper bound holds:
\begin{equation}~\label{K3_upper_bound_formula}
   r_3 \leq \binom{r_2}{2}.
\end{equation}
\end{lemma}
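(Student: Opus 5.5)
The plan is to read the bound off directly from the description of the triangle degree recorded in Section~\ref{sect-defs-prelim}: for any vertex $v$ we have $K_3\deg(v)=\left|{N(v)\choose 2}\cap E(G)\right|$. The reason is that each triangle through $v$ is determined by its other two vertices. These two vertices both lie in $N(v)$ and are adjacent, so they form an edge of $G$ inside the neighbourhood of $v$. Conversely, every such edge closes a triangle with $v$.

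The steps are as follows. First, fix an arbitrary vertex $v$ of a graph $G$ with parameters $(r_2,r_3)$. By definition, $K_3\deg(v)=r_3$ and $|N(v)|=\deg(v)=r_2$. Second, use the trivial inclusion ${N(v)\choose 2}\cap E(G)\subseteq {N(v)\choose 2}$ to get
\[
r_3=K_3\deg(v)=\left|{N(v)\choose 2}\cap E(G)\right|\le \left|{N(v)\choose 2}\right|={r_2\choose 2}.
\]
This is exactly \eqref{K3_upper_bound_formula}.

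There is no real obstacle here. The only point that needs care is the bijection between triangles containing $v$ and edges of $G$ with both ends in $N(v)$. Since a triangle is a copy of $K_3$, it is determined by its vertex set, so no overcounting occurs. I would also record when equality holds: exactly when $N(v)$ is a clique for every $v$. That means every connected component is $K_{r_2+1}$, which may be useful later.
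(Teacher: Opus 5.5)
Your argument is correct and is exactly the intended one. The paper records this lemma as trivial, with no written proof, and it rests on the identity $K_3\deg(v)=\left|{N(v)\choose 2}\cap E(G)\right|$ from Section~\ref{sect-defs-prelim}, which gives the bound at once, just as you read it off.
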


\subsection{Edge degree statements}

The following Lemmas 
\ref{K3_deg_eq},
\ref{K3_edge_upper_bound} and \ref{K3_edge_lower_bound} are considered instrumental since what they provide is several relations between the edge $K_3$-degree and vertex degrees without giving any tangible constraints on the parameters right away. Nevertheless, they will play a crucial role in subsequent existence and non-existence arguments.

The following lemma follows directly from the handshaking lemma, and it will be used several times throughout the article.

\begin{lemma}\label{K3_deg_eq}
For any graph $G$ and any vertex $v$, we have
    \[K_3 \deg(v) = \frac12 \sum_{u \in N(v)} K_3 \deg(uv). \]
\end{lemma}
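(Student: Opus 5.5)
The plan is a double count of the pairs $(u,T)$, where $u \in N(v)$ and $T$ is a triangle of $G$ containing both $v$ and $u$. The identity is really the handshaking lemma applied to the subgraph $G(v)$, i.e.\ the subgraph induced by $N(v)$.

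First, I will use the description of $K_3\deg(v)$ recalled after the definition of $F$-degree. The triangles containing $v$ are in bijection with the edges $xy$ where $x,y \in N(v)$, via $T=\{v,x,y\} \mapsto xy$. Hence $K_3\deg(v) = |E(G(v))|$.

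Second, I will compute the degree of a vertex $u \in N(v)$ inside $G(v)$. Its neighbours there are exactly the vertices of $N(u)\cap N(v)$. By the remark following the definition of edge $F$-degree, this number is $K_3\deg(uv)$. So $\deg_{G(v)}(u) = K_3\deg(uv)$.

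Third, the handshaking lemma in $G(v)$ gives
\[
\sum_{u\in N(v)} \deg_{G(v)}(u) = 2\,|E(G(v))|.
\]
Substituting the two identities above yields $\sum_{u\in N(v)} K_3\deg(uv) = 2\,K_3\deg(v)$, which is the claim.

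The same count can be phrased without $G(v)$. Each triangle through $v$ contains exactly two edges incident to $v$, so it is counted exactly twice in $\sum_{u\in N(v)} K_3\deg(uv)$.

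There is no real obstacle here. The only point needing care is the bijection between triangles through $v$ and edges of $G(v)$, and the fact that the common neighbourhood $N(u)\cap N(v)$ lies inside $N(v)$ and excludes $u$ itself. Both follow immediately because the graph is simple.
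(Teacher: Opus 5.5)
Your proof is correct and is essentially the paper's argument. Both identify the triangles through $v$ with the edges of the subgraph $G(v)$ induced by $N(v)$, observe that $\deg_{G(v)}(u)=K_3\deg(uv)$ for $u\in N(v)$, and apply the handshaking lemma to $G(v)$.
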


\begin{proof}
    Consider the induced subgraph $H = G(v)$. Each triangle of $G$ containing $v$ corresponds to an edge of $H$, therefore $K_3\deg_{G}(v) = |E(H)|$. Moreover, for every $u \in V(H)$, we have 
    \[K_3 \deg_{G}(uv)=\deg_{H}(u).\] 
    Applying the handshaking lemma to $H$ yields 
    \[\frac12 \sum_{u \in V(H)} K_3 \deg_G(uv) = \frac12\sum_{u \in V(H)} \deg_{H}(u) = |E(H)| = K_3 \deg_G(v),\] which completes the proof.
\end{proof}

The next lemma gives an upper bound for the $K_{3}$-degrees of edges in regular $K_{3}$-regular graphs.

\begin{lemma}\label{K3_edge_upper_bound}
Let $G$ be a graph with parameters $(r_2, r_3)$. Then, for any edge $e\in E(G)$, the following upper bound holds:
\begin{equation}~\label{K3_upper_bound_edge_formula}
    K_3 \deg(e) \leq \min\{r_3, r_2-1\}.
\end{equation}
\end{lemma}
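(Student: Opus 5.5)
We prove the two inequalities $K_3\deg(e)\le r_2-1$ and $K_3\deg(e)\le r_3$ separately, for an arbitrary edge $e=uv\in E(G)$. Both follow from the description $K_3\deg(uv)=|N(u)\cap N(v)|$ together with the local picture used in the proof of Lemma~\ref{K3_deg_eq}.

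For the first bound, note that $N(u)\cap N(v)\subseteq N(v)\setminus\{u\}$, since $u\notin N(u)$ as the graph is simple. Regularity gives $|N(v)|=r_2$, and therefore
\[K_3\deg(uv)=|N(u)\cap N(v)|\le |N(v)|-1=r_2-1.\]

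For the second bound, pass to the induced subgraph $H=G(v)$, as in the proof of Lemma~\ref{K3_deg_eq}. There we saw that $K_3\deg_G(uv)=\deg_H(u)$ and $|E(H)|=K_3\deg_G(v)=r_3$. The degree of a single vertex $u$ in $H$ cannot exceed the number of edges of $H$, because the $\deg_H(u)$ edges incident to $u$ are distinct edges of $H$. Hence $K_3\deg(uv)=\deg_H(u)\le |E(H)|=r_3$.

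Combining the two inequalities yields the claimed bound by $\min\{r_3,r_2-1\}$. There is no real obstacle here. The only point requiring care is the identification of the edge $K_3$-degree with a vertex degree in $G(v)$, and that identification is already established in Lemma~\ref{K3_deg_eq}.
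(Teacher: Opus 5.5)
Your proof is correct and is essentially the paper's argument. The bound $r_2-1$ comes from the same containment $N(u)\cap N(v)\subseteq N(v)\setminus\{u\}$. Your inequality $\deg_H(u)\le |E(H)|=r_3$ in $H=G(v)$ is a rephrasing of the paper's injection, which sends each common neighbour $x$ of the endpoints to the triangle $\{u,v,x\}$ through a fixed endpoint of $e$.
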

\begin{proof}
Fix an edge $e=uw \in E(G)$.
By definition,
$K_3 \deg(e) = |N(u)\cap N(w)|$. Since, $N(u)\cap N(w) \subseteq N(u)\cap (N(w)\setminus \{u\})$, we obtain \[K_3 \deg(e) \leq |N(w)\setminus \{u\}| \leq r_2-1.\]

On the other hand, each vertex $v \in N(u)\cap N(w)$ determines a triangle $\{u,w,v\}$ containing $u$. Hence, $K_3 \deg(e) \leq K_3\deg(u) = r_3$, and the claim follows.
\end{proof}

The following lemma complements the previous result by relating edge $K_{3}$-degrees to the regularity parameters.

\begin{lemma}\label{K3_edge_lower_bound}
    Let $G$ be a graph with parameters $(r_2, r_3)$ and let $e \in E(G)$ be an edge with $K_3 \deg(e) = k$. Then the following inequality holds:
    \[2r_3 \leq (r_2 - k - 1)(r_2-2) + k(k+1).\]
\end{lemma}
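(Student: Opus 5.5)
The plan is to double count triangles through the two endpoints of the edge $e=uw$ simultaneously, rather than through a single vertex. Counting at $u$ alone, via Lemma~\ref{K3_deg_eq}, gives only $2r_3\le 2(r_2-k-1)(r_2-2)+k(k+1)$, which has a factor $2$ too many on the first term. So I would write
\[
2r_3 = K_3\deg(u)+K_3\deg(w) = |E(G(u))|+|E(G(w))|,
\]
using the observation in the proof of Lemma~\ref{K3_deg_eq} that $K_3\deg(v)=|E(G(v))|$.

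Set $C=N(u)\cap N(w)$, so $|C|=k$. Set $A_u=N(u)\setminus N[w]$ and $A_w=N(w)\setminus N[u]$, so $|A_u|=|A_w|=r_2-1-k$ by regularity. Then $V(G(u))=C\cup\{w\}\cup A_u$, and symmetrically for $G(w)$. I would split the edges of $G(u)$ into two classes: those with both ends in $C\cup\{w\}$, and those meeting $A_u$. Do the same for $G(w)$ with $C\cup\{u\}$ and $A_w$.

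\emph{Edges inside the core.} The first class has at most $\binom{k+1}{2}$ edges in each of $G(u)$ and $G(w)$. Together these contribute at most $k(k+1)$, which is exactly the second term of the desired inequality.

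\emph{Edges meeting $A_u$ or $A_w$.} It remains to show that the total number of such edges, over both $G(u)$ and $G(w)$, is at most $(r_2-k-1)(r_2-2)$. The key local facts are the following:
\begin{itemize}
\item a vertex $a\in A_u$ is adjacent to $u$ but not to $w$, so it has at most $r_2-1$ neighbours in $G(u)$, and in fact at most $r_2-2$ once the non-edge to $w$ is used;
\item a vertex $x\in C$ is adjacent to both $u$ and $w$, so only $r_2-2$ of its edges are free to go to $A_u\cup A_w\cup C$.
\end{itemize}
I would try to charge each edge meeting $A_u$ (in $G(u)$) or $A_w$ (in $G(w)$) to a "slot" of this kind. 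I expect to pair vertices of $A_u$ with vertices of $A_w$, or alternatively to charge edges of type $C$–$A$ to the free slots of vertices in $C$ and edges inside $A_u$ or $A_w$ to their endpoints. The aim is that each of the $r_2-k-1$ "positions" receives at most $r_2-2$ charges across both graphs.

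\emph{Expected obstacle.} This charging step is where I expect the real difficulty. The naive degree bound charges each $a\in A_u$ up to $r_2-2$ and each $b\in A_w$ up to $r_2-2$, which gives twice too much. To recover the factor $2$, the argument must exploit the following: edges from $A_u$ to $C$ in $G(u)$ compete with edges from $A_w$ to $C$ in $G(w)$ for the same bounded degree of vertices in $C$. Also, edges with both ends in $A_u$ are counted from both of their endpoints. If a clean charging does not work, my fallback is to parametrise the number of edges between $C$ and $A_u\cup A_w$ and the number of edges inside $A_u$ and inside $A_w$. I would then bound $|E(G(u))|+|E(G(w))|$ as a function of these quantities using the degree constraints at the vertices of $C$, $A_u$ and $A_w$, and maximise that function.
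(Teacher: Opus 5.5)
Your framework, $2r_3=|E(G(u))|+|E(G(w))|$ with $N(u)$ split into $C$, $\{w\}$, $A_u$ (and symmetrically for $w$), is the same double count the paper performs via $|\triangle(u)\cup\triangle(w)|=2r_3-k$. The gap is in how you split the right-hand side. You claim that the edges meeting $A_u$ in $G(u)$, together with the edges meeting $A_w$ in $G(w)$, number at most $(r_2-k-1)(r_2-2)$. This is false, so no charging scheme can establish it.

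A counterexample is the octahedron $\operatorname{Turan}(6,3)$, which has parameters $(4,4)$. Take any edge $uw$. Then $C$ consists of two non-adjacent vertices ($k=2$), $A_u=\{a\}$ and $A_w=\{b\}$, and each of $a,b$ is adjacent to both vertices of $C$. So your non-core part equals $4>2=(r_2-k-1)(r_2-2)$, while the core part equals $4<6=k(k+1)$. The lemma itself holds with equality here: $8=2+6$. Similarly, in $K_{3,3,3}$ the non-core part is $12$ against the claimed $8$.

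The bound $\binom{k+1}{2}$ per link is attained only when $C$ is a clique. Whenever $C$ is sparser, the degree of the $C$-vertices not spent inside $C$ becomes available for $C$--$A$ edges. Your fallback has the same defect as long as the core is bounded on its own. Bounding the core by $k(k+1)$ and the $C$--$A$ edges by the free degree $k(r_2-2)$ of $C$ overshoots the target by $k(k-1)$.

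The missing idea is to put the edges inside $C$ and the $C$--$A$ edges on one common budget, and to bound the edges inside $A_u$ and $A_w$ trivially rather than through degrees. Write $e(X)$ for the number of edges inside $X$ and $e(X,Y)$ for the number between $X$ and $Y$. Each $x\in C$ has exactly $r_2-2$ neighbours besides $u$ and $w$. Every edge inside $C$ lies in both links, so
\[
2e(C)+e(C,A_u)+e(C,A_w)=\sum_{x\in C}|N(x)\cap(C\cup A_u\cup A_w)|\le k(r_2-2).
\]
Now add three contributions:
\begin{enumerate}
\item the $k$ edges joining $C$ to $w$ in $G(u)$;
\item the $k$ edges joining $C$ to $u$ in $G(w)$;
\item the edges inside $A_u$ and $A_w$, with $e(A_u)+e(A_w)\le 2\binom{r_2-k-1}{2}$.
\end{enumerate}
This gives $2r_3\le kr_2+(r_2-k-1)(r_2-k-2)$, which is identically equal to $(r_2-k-1)(r_2-2)+k(k+1)$. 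This is exactly the paper's argument. The saving you were looking for comes from the shared degree budget of the $C$-vertices, not from the $A$-vertices. The right-hand side should not be read as a core bound plus a separate non-core bound.
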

\begin{proof}
    Let $G$ be a graph with parameters $(r_2, r_3)$ and let $uv \in E(G)$. Set \[K_3 \deg(uv) = |N(u)\cap N(v)|=k.\] For a vertex $x \in V(G)$, denote by $\triangle(x)$ the set of all triangles containing $x$. Our first step is to estimate the size of the set $\triangle(u)\cup \triangle(v)$.

    Define 
    \[C = N(u)\cap N(v) = \{w_1, \ldots, w_k\},\quad I_1 = N(u)\setminus N[v],\quad I_2 = N(v)\setminus N[u],\] 
    put $i = |I_1|=|I_2| = r_2-k-1$. Let $G'$ be a subgraph induced by $C$, and for each $w_j \in C$ set $d_j = \deg_{G'}(w_j)$. Note that $0\leq d_j\leq k-1$.

    Now, having taken care of the prerequisites, we shall proceed by calculating $|\triangle(u)\cup \triangle(v)|$. On the one hand, \[|\triangle(u)\cup \triangle(v)| = |\triangle(u)| + |\triangle(v)| - |\triangle(u) \cap\triangle(v)| = r_3+r_3 - |C| = 2r_3-k.\] 
    
    On the other hand, let $\triangle ABC$ denote the set of triangles having one vertex in each of $A$, $B$, and $C$ (where single vertices are identified with singleton sets). We decompose the set $\triangle(u)\cup \triangle(v)$ into the following disjoint subsets:
    \begin{align*}
    \triangle(u)\cup \triangle(v) = (\triangle I_1 I_1 u \sqcup \triangle I_2 I_2 v) \sqcup \triangle C uv &\sqcup (\triangle CCu \cup \triangle CCv) \\
    &\sqcup (\triangle CI_1u \cup \triangle CI_2v).
    \end{align*}
    
    We estimate the cardinalities of these sets separately:
    \begin{itemize}
    \item Clearly, \[|\triangle I_1 I_1 u \sqcup \triangle I_2 I_2 v| \leq 2 {i \choose 2}.\] 
    
    \item The number of triangles containing $u$, $v$ and one vertex of $C$ is exactly 
    \[|\triangle C u v| = |C|=k.\]
    
    \item Each edge of $G'$ contributes to exactly two triangles, one containing $u$ and one containing $v$, therefore
    \[|\triangle CCu \cup \triangle CCv| = 2|E(G')|=\sum_{j=1}^k d_j.\]
    
    \item Finally, consider the term $|\triangle C I_1 u \cup \triangle C I_2 v|$. Each vertex $w_j$ is adjacent to $d_j$ vertices from $C$ and to $u$ and $v$. Hence, $w_j$ cannot contribute more than $r_2-d_j-2$ triangles of the aforementioned type. 
    
    Thus, \[|\triangle CI_1u \cup \triangle CI_2v|\leq \sum_{j=1}^k (r_2-d_j-2).\]
    \end{itemize}

    Summing it all up, we obtain 
    \begin{align*}
        2r_3-k = |\triangle(u)\cup \triangle(v)| & \leq 2 {i\choose 2} +k + \sum_{j=1}^k d_j + \sum_{j=1}^k (r_2-d_j-2) \\ &= 2 {i\choose 2} + (r_2-1)k.
    \end{align*}
    Substituting $i=r_2-k-1$ and rearranging yields the stated inequality. 
\end{proof}

\subsection{Non-trivial bounds on the parameters}

The following two statements provide some non-trivial restrictions on the parameters $(r_2, r_3)$ for which there exists a regular $K_{3}$-regular graph. 
It is worth noting that \cite[Proposition~14]{CaroMifsud2025}  states the non-existence of graphs with parameters $(5, 7), (5, 8)$ and $(6, 11)$ by looking specifically at their inner structure. Our next results vastly generalise this fact.
The next proposition is proved by virtue of combining the three lemmas from the previous subsection. 

\begin{proposition}\label{Cr:noDown}
    There are no graphs with parameters $(r_2, r_3)$ such that \[r_3 = {r_2 - 1 \choose 2} + c\] for $r_2 > 4$ and $0 < c < \frac{r_2 - 2}{2}$.
\end{proposition}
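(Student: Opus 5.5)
The plan is to play Lemma~\ref{K3_edge_lower_bound} against Lemma~\ref{K3_deg_eq}. Lemma~\ref{K3_edge_lower_bound} should force every edge to have a large triangle degree, and Lemma~\ref{K3_deg_eq} then makes the total at any vertex too big. Throughout, write $r_2 = r$ and
\[
2r_3 = (r-1)(r-2) + 2c = r^2 - 3r + 2 + 2c .
\]

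\emph{Step 1 (every edge has large $K_3$-degree).} Fix an edge $e$ with $K_3\deg(e) = k$, so $0 \le k \le r-1$ by Lemma~\ref{K3_edge_upper_bound}. Lemma~\ref{K3_edge_lower_bound} gives $2r_3 \le f(k)$, where $f(k) = (r-k-1)(r-2) + k(k+1)$. Expanding,
\[
f(k) - (r-1)(r-2) = k(k+1) - k(r-2) = k\,(k+3-r).
\]
So the inequality becomes $k(k+3-r) \ge 2c > 0$, using $c>0$. This requires $k > 0$ and $k + 3 - r > 0$, that is, $k \ge r-2$. Hence every edge of $G$ has $K_3$-degree equal to $r-2$ or $r-1$. (For $k = r-2$ the condition reads $r-2 \ge 2c$, which the hypothesis on $c$ allows, so this step alone gives no contradiction.)

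\emph{Step 2 (sum at a vertex).} Fix any vertex $v$; it exists, and $r > 4$ ensures it has $r$ neighbours. By Lemma~\ref{K3_deg_eq} and Step~1,
\[
2r_3 = \sum_{u\in N(v)} K_3\deg(uv) \ge r(r-2) = r^2 - 2r .
\]
On the other hand, $c < \frac{r-2}{2}$ gives
\[
2r_3 = r^2 - 3r + 2 + 2c < r^2 - 3r + 2 + (r-2) = r^2 - 2r .
\]
These two inequalities contradict each other, so no graph with these parameters exists.

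The only delicate point is Step~1: one has to see that the quadratic bound of Lemma~\ref{K3_edge_lower_bound} factors neatly against $(r-1)(r-2)$, so that the strict positivity $c>0$ rules out every $k \le r-3$. The upper restriction $c < \frac{r-2}{2}$ is then used only in the averaging of Step~2. The hypothesis $r>4$ does not enter the algebra above; it presumably serves to make the range $0<c<\frac{r-2}{2}$ nonempty for integer $c$.
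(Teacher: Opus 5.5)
Your proof is correct and follows the same route as the paper: Lemma~\ref{K3_edge_lower_bound} (with Lemma~\ref{K3_edge_upper_bound}) forces every edge to have $K_3$-degree $r_2-2$ or $r_2-1$, and Lemma~\ref{K3_deg_eq} at a single vertex then gives $2r_3 \ge r_2(r_2-2)$, which contradicts $c<\frac{r_2-2}{2}$. Your factorisation $f(k)-(r_2-1)(r_2-2)=k(k+3-r_2)$ is a cleaner way to solve the quadratic inequality than the paper's quadratic formula, and you are right that $r_2>4$ only serves to make the range of integer $c$ nonempty.
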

\begin{proof}
Suppose that $G$ is a regular $K_3$-regular graph with $r_3 ={r_2 - 1 \choose 2}+c$. Fix an arbitrary edge $e \in E(G)$ and put $k = K_3 \deg(e)$. 
By virtue of Lemma~\ref{K3_edge_upper_bound}, we have $k\leq r_2-1$. Moreover, Lemma~\ref{K3_edge_lower_bound} yields \[2\left({r_2 - 1 \choose 2}+c\right) = 2r_3 \leq (r_2 - 1 -k)(r_2 - 2)+k(k+1).\]
Solving the quadratic inequality in $k$, we obtain \[k\geq \frac{r_2 - 3}{2}+\sqrt{2c+\left(\frac{r_2 - 3}{2}\right)^2} > r_2 - 3\] or \[k\leq \frac{r_2 - 3}{2} - \sqrt{2c + \left(\frac{r_2 - 3}{2} \right)^2} < 0.\]
Since $k \geq 0$, the latter case is impossible and, therefore, $k \in \{r_2 - 2, r_2 - 1\}$. 

Fix an arbitrary vertex $v \in V(G)$. Among the $r_2$ edges incident to $v$, let $a$ and $b$ denote the numbers of edges $vu$ of $K_3$-degree $r_2 - 2$ and $r_2 - 1$, respectively.
Then $a+b = r_2$. 
By Lemma \ref{K3_deg_eq}, we obtain
 \begin{align*}
 2 K_3 \deg(v) &= \sum_{u \in N(v)} K_3 \deg(uv),\\
 2r_3 = 2\left({r_2 - 1 \choose 2}+c\right)&=(r_2 - 2) a + (r_2 - 1) b,\\ a+b&=r_2, \quad a, b \geq 0. 
\end{align*}

Using $c < \frac{r_2 -2}{2}$, we have
\begin{align*}
(r_2 - 2)a + (r_2 - 1)b = 2 \left({r_2 - 1 \choose 2} + c \right) &< r_2^2 - 2r_2,\\
r_2(a+b) - (a + b) - a &< r_2^2 - 2r_2,\\
r_2 &< a.
\end{align*}
This leads to a contradiction. Hence, there are no such graphs.
\end{proof}

The following theorem establishes the non-existence result for a range of regularity parameters. In particular, it excludes such pairs of parameters $(r_2,r_3)$ as $(4, 5)$ and $(5, 9)$, which appear in Table~\ref{tab:admissible} in Appendix~\ref{appendix} (see Corollaries~\ref{cor:5.1} and~\ref{cor:5.2}). Our theorem improves upon \cite[Theorem 2]{SheffieldXi:25}; in particular, we exclude one more parameter for every odd $r_2$ by using a different proof technique. Moreover, this result affirmatively answers the question posed in \cite{Jajcay2025} regarding the existence of $\operatorname{vgr}(n, r_2, 3, r_3 )$-graphs (see Theorems 17, 18 and the discussion afterwards therein).

\begin{theorem}\label{Pr:noDown}
    There exists no graph with parameters $(r_2, r_3)$ such that $r_3 = {r_2 \choose 2} - c$, provided $r_2 \geq 3$ and $1\leq c\leq\frac{r_2-1}{2}$.
\end{theorem}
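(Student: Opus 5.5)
The idea is to look at the \emph{missing} edges inside neighbourhoods. Suppose $G$ has parameters $(r_2,r_3)$ with $r_3=\binom{r_2}{2}-c$ and $1\le c\le \frac{r_2-1}{2}$. For every vertex $v$, the graph $G(v)$ has $r_2$ vertices and $\binom{r_2}{2}-c$ edges. So exactly $c$ pairs of neighbours of $v$ are non-adjacent; call these the missing edges at $v$. Together they touch at most $2c\le r_2-1<r_2$ vertices of $N(v)$. Hence some $u\in N(v)$ is adjacent to all other neighbours of $v$, so $K_3\deg(uv)=r_2-1$, the maximum allowed by Lemma~\ref{K3_edge_upper_bound}. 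Since $\deg u=\deg v$, this forces $N[u]=N[v]$. Thus every vertex has a closed twin.

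Next I would pass to twin classes. Write $u\sim v$ iff $N[u]=N[v]$. This is an equivalence relation, each class $S$ is a clique, and adjacency between vertices depends only on their classes. By the previous step every class has size at least $2$. Choose $v$ whose class $S$ has the minimum size $s\ge 2$.
\begin{itemize}
\item If $N(v)\setminus S=\emptyset$, the component of $v$ is $K_{r_2+1}$. Then $r_3=\binom{r_2}{2}$, i.e. $c=0$, which is excluded.
\item So $N(v)\setminus S$ has $r_2+1-s\ge 1$ vertices.
\end{itemize}
Vertices of $S\setminus\{v\}$ are adjacent to everything in $N[v]$, so all $c$ missing edges at $v$ lie inside $N(v)\setminus S$.

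The key counting step is as follows. Take $x\in N(v)\setminus S$. Since $N[x]\ne N[v]$ but $|N[x]|=|N[v]|$, there is some $y\in N[v]\setminus N[x]$. Here $y\ne v$, and $y\notin S$ because $S$ is joined to $x$. Then $x$ is non-adjacent to the whole class of $y$, which has size at least $s$. That class lies in $N(v)$, because twins of $y$ share the closed neighbourhood $N[y]\ni v$, and it lies outside $S$. So each of the $r_2+1-s$ vertices of $N(v)\setminus S$ has at least $s$ non-neighbours inside $N(v)$. Double counting gives
\[
c\ \ge\ \tfrac12\, s\,(r_2+1-s).
\]
The same observation shows $N(v)\setminus S$ has at least $s+1\ge 3$ vertices, so $2\le s\le r_2-2$. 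On this interval the concave function $s(r_2+1-s)$ is minimised at an endpoint, giving $s(r_2+1-s)\ge 2(r_2-1)$. Therefore $c\ge r_2-1>\frac{r_2-1}{2}$, contradicting the hypothesis.

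The main obstacle is the passage from ``some edge at $v$ has maximal $K_3$-degree'' to a usable global structure. Using closed-twin classes, and picking $v$ in a class of \emph{minimum} size, lets each non-adjacency be amplified to at least $s$ missing edges, and that is what makes the count close. I would double-check the boundary cases $r_2=3,4$ separately; the argument above seems to handle them, since then $c=1$ and the interval for $s$ is tiny or empty.
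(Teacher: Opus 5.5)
Your proof is correct and complete, including the boundary cases $r_2=3,4$, and it takes a genuinely different route from the paper's.

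The paper's argument stays local around one vertex $v$:
\begin{itemize}
\item It picks $u\in N(v)$ of minimum degree $r_2-d<r_2$ in $G[v]$, and a neighbour $w\notin N[v]$ of $u$.
\item It then upper-bounds $K_3\deg(w)$ by splitting the triangles at $w$ into the types $\triangle IwI$, $\triangle B'wI$, $\triangle B'wB'$, where $I=N(w)\setminus N[v]$.
\item This gives $K_3\deg(w)\le F(|I|)\le F(r_2-1)=\binom{r_2-1}{2}+c-1<r_3$. The hypothesis $2c\le r_2-1$ enters both through the admissible range $r_2-2c\le |I|\le r_2-1$ and through this final comparison.
\end{itemize}

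You instead use the hypothesis $2c<r_2$ exactly once, in a pigeonhole step. That step gives a neighbour of full degree in $G(v)$, hence, by regularity, a closed twin. After that, the closed-twin quotient and the choice of a minimum-size class $s$ turn every non-adjacency into at least $s$ missing pairs, giving $c\ge \frac12 s(r_2+1-s)\ge r_2-1$.

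What your route buys:
\begin{itemize}
\item It avoids the set-theoretic bookkeeping of the paper's Type~3 estimate and is noticeably shorter.
\item It shows where the threshold $\frac{r_2-1}{2}$ comes from: only the existence of twins needs it. Once every vertex has a closed twin, any non-complete component already forces $c\ge r_2-1$.
\item You only use an upper bound on the number of missing pairs at each vertex. So if you minimise the class size within each component, you get a stronger statement: every $r_2$-regular graph with $K_3\deg(v)\ge\binom{r_2}{2}-\frac{r_2-1}{2}$ for all $v$ is a disjoint union of copies of $K_{r_2+1}$.
\end{itemize}

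What the paper's route buys: it is purely local and never needs a minimum over the whole graph or a component. It also yields an explicit quantitative triangle deficit at a specific vertex at distance two from $v$, in the same counting style as Lemma~\ref{K3_edge_lower_bound}.
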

\begin{proof}
    Suppose, for the sake of contradiction, that such a graph $G$ exists. 
    Let $v \in V(G)$ be an arbitrary vertex and let $G[v]$ be the subgraph induced by the closed neighbourhood of $v$. Note that the $K_3$-degree of $v$ equals the number of edges in its open neighbourhood, namely, \[|E(G(v))|= K_3 \deg(v) = {r_2 \choose 2} - c.\] 
    Therefore, the minimum degree in $G[v]$ is at least $r_2 - c$. Fix a vertex $u \in N[v]$ of such degree in $G[v]$.  Hence, $\deg_{G[v]}(u) = r_2-d$, where $1 \leq d \leq c$. 
    
    Since $\deg_{G[v]}(u) < r_2$, there exists a vertex $w \in V(G) \setminus N[v]$ such that $wu \in E(G)$. 
    For $k \in \{r_2 - d, \ldots, r_2 - 1 \}$, define 
    \[B_k = \{v' \in N(v): \deg_{G[v]}(v') = k\},
    \qquad
    B_k' = \{v'\in B_k: v'w\in E\}.\]
    Note that $u \in B_{r_2-d}'$ by construction (see Figure~\ref{fig:for:thm:1}).
    Also, denote their unions as
    \[
    B = \bigcup_{k=r_2-d}^{r_2-1} B_k
    \qquad \text{and} \qquad
    B'= \bigcup_{k=r_2-d}^{r_2-1} B_k'.
    \]
    The handshaking lemma applied to the complement of $G[v]$ yields 
    \begin{equation}\label{Bto2c}
        \sum_{j=r_2-d}^{r_2-1} (r_2-j)|B_j|=2c.
    \end{equation}

    In order to reach the contradiction, we will find an upper bound for the $K_3$-degree of $w$.
    For $b_k = |B_k'|$, we have the following constraints: 
    \[b_k \geq 0,~ b_{r_2-d} \geq 1,~ b_{r_2-1}+2b_{r_2-2}+\ldots+db_{r_2-d} \leq 2c.\]
    Denote $I = N(w) \setminus N[v]$, and observe the following:
    \begin{equation}\label{eqr2i}
    |I|+b_{r_2-1}+\ldots+b_{r_2-d} = \deg_G(w) = r_2.
    \end{equation}

    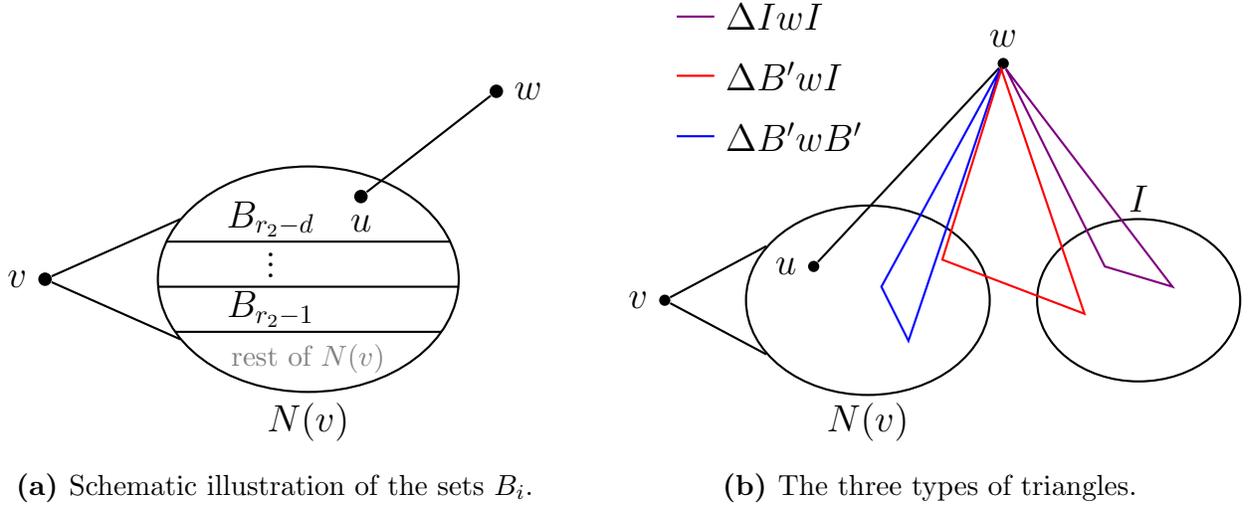
\begin{figure}[htbp]
    	\centering
    	\begin{subfigure}{0.45\textwidth}
    		\centering
    		\begin{tikzpicture}[
    			vertex/.style={circle, fill=black, inner sep=1.8pt},
    			important/.style={circle, fill=black, inner sep=1.8pt},
    			>=Stealth,        
    			font=\small
    			]
    			
    			\node[vertex, label=left:{\large $v$}] (v) at (0,0) {};
    			\node[vertex, label=right:{\large $w$}] (w) at (6, 2.5) {};
    			
    			\draw[thick] (3.5,0) ellipse (2cm and 1.5cm);
    			\node at (3.5, -1.9) {\large $N(v)$};
    			
    			\draw[thick] (v) -- (1.8, 0.8);
    			\draw[thick] (v) -- (1.8, -0.8);
    			
    			\begin{scope}
    				\clip (3.5,0) ellipse (2cm and 1.5cm);
    				
    				\draw[thick] (1, 0.5) -- (6, 0.5);
    				\draw[thick] (1, -0.1) -- (6, -0.1);
    				\draw[thick] (1, -0.7) -- (6, -0.7);
    				
    				\node at (3, 0.8) {\large $B_{r_2-d}$};
    				\node at (3, 0.3) {\large $\vdots$};
    				\node at (3, -0.4) {\large $B_{r_2-1}$};
    				
    				\node[font=\small, text=gray] at (3.5, -1.05) {rest of $N(v)$};
    			\end{scope}
    			
    			\node[important, label=below:{\large $u$}] (u) at (4.2, 1.1) {};
    			
    			\draw[thick] (u) -- (w);
    			
    		\end{tikzpicture}
    		\caption{Schematic illustration of the sets $B_i$.}
    		\label{fig:for:thm:1}
    	\end{subfigure}
    	\hfill
    	\begin{subfigure}{0.5\textwidth}
    		\begin{tikzpicture}[
    			vertex/.style={circle, fill=black, inner sep=1.5pt},
    			>=Stealth,
    			font=\small,
    			scale=0.90
    			]
    			
    			\coordinate (v_pos) at (0,0);
    			\coordinate (Nv_center) at (3,0);
    			\coordinate (I_center) at (7,0);
    			\coordinate (w_pos) at (5, 3.5);
    			
    			\draw[thick] (Nv_center) ellipse (1.8cm and 1.4cm);
    			\node at (3, -1.8) {\large $N(v)$};
    			
    			\draw[thick] (I_center) ellipse (1.5cm and 1.2cm);
    			\node at (7, 1.5) {\large $I$};
    			
    			\node[vertex, label=left:{\large $v$}] (v) at (v_pos) {};
    			
    			\draw[thick] (v) -- (1.5, 0.8);
    			\draw[thick] (v) -- (1.5, -0.8);
    			
    			\node[vertex, label=left:{\large $u$}] (u) at (2.2, 0.5) {};
    			
    			\node[vertex, label=above:{\large $w$}] (w) at (w_pos) {};
    			
    			\draw[thick] (u) -- (w);
    			
    			
    			\coordinate (b1) at (3.2, 0.2);
    			\coordinate (b2) at (3.6, -0.6);
    			\draw[blue, thick] (w) -- (b1) -- (b2) -- cycle;
    			
    			\coordinate (i1) at (6.5, 0.5);
    			\coordinate (i2) at (7.5, 0.2);
    			\draw[violet, thick] (w) -- (i1) -- (i2) -- cycle;
    			
    			\coordinate (br) at (4.1, 0.6); 
    			\coordinate (ir) at (6.2, -0.2); 
    			\draw[red, thick] (w) -- (br) -- (ir) -- cycle;
    			
    			\begin{scope}[shift={(0, 3.3)}]
    				\matrix[row sep=1.5mm, anchor=west] {
    					\draw[violet, thick] (0,0) -- (0.5,0); & \node[right] {\large $\Delta I w I$}; \\
    					\draw[red, thick] (0,0) -- (0.5,0); & \node[right] {\large $\Delta B' w I$}; \\
    					\draw[blue, thick] (0,0) -- (0.5,0); & \node[right] {\large $\Delta B' w B'$}; \\
    				};
    			\end{scope}
    			
    		\end{tikzpicture}
    		\caption{The three types of triangles.}
    		\label{fig:for:thm:2}
    	\end{subfigure}
    	\caption{Constructions in Theorem~\ref{Pr:noDown}.}
    \end{figure}

    Now we split the triangles containing $w$ into three types (see Figure~\ref{fig:for:thm:2}) and count the size of each type.
    
    \medskip
    \noindent
    \textbf{Type 1:} $\triangle IwI$.
    These are triangles with two vertices in $I$ and one vertex~$w$. Clearly, 
    \begin{equation}\label{trIwI}
    |\triangle IwI| \leq {|I|\choose2}.
    \end{equation}

    \medskip
    \noindent
    \textbf{Type 2:} $\triangle B'wI$.
    These are triangles with one vertex in $I$, one in $B'$ and the vertex $w$. If $x \in B_{r_2-i}'$, then $x$ has exactly $i$ non-neighbours in $G[v]$. Also, $x$ is adjacent to $w$. Hence, $x$ can be adjacent to at most $i-1$ vertices of $I$. Therefore, 
    \begin{equation}\label{trBwI}
    |\triangle B'wI| \leq \sum_{i=1}^{d} (i-1)b_{r_2-i} = b_{r_2-2} + 2b_{r_2-3} + \ldots + (d-1) b_{r_2-d}.
    \end{equation}

    \medskip
    \noindent
    \textbf{Type 3:} $\triangle B'wB'$. These are triangles with two vertices in $B'$ and the vertex~$w$.
    We have 
    \[ \left| \triangle B'wB' \right| = \left| {B'\choose 2}\cap E(G) \right| = \left|{B'\choose 2} \right| - \left|{B'\choose 2}\setminus E(G) \right|.\]
    We shall make use of the following set equality: 
    \[{Y \choose 2} = {X \choose 2}\sqcup {Y\setminus X \choose 2}\sqcup (X\hat{\times}Y),\] where $X \subseteq Y$ and $X\hat{\times}Y=\{\{x, y\}: x\in X,\, y\in Y\setminus X\}$. 
    Apply this set equality to $B'\subseteq B$ and express ${B' \choose 2}$ from it
    \[{B' \choose 2} = {B \choose 2}\setminus\left({B\setminus B' \choose 2}\sqcup (B'\hat{\times}B)\right).\]
    As a result, by virtue of straightforward set-theoretic manipulations, we have that
    \[{B' \choose 2} \setminus E(G) = \left( {B\choose 2} \setminus E(G) \right) \setminus \left( \left( {B\setminus B' \choose 2} \sqcup (B' \hat{\times} B ) \right) \setminus E(G) \right).\]
    
    Thus, using~\eqref{eqr2i} we obtain
    \begin{align}\label{trB'wB'}
    |\triangle B'wB'| &= {b_{r_2-1}+\ldots+b_{r_2-d} \choose 2} - \nonumber \\  &\qquad\quad - \left|\left({B\choose 2}\setminus E(G) \right)\setminus \left(\left({B\setminus B' \choose 2} \sqcup (B'\hat{\times} B)\right)\setminus E(G) \right)\right| \nonumber \\
    &= {r_2-|I|\choose 2} - \left|{B \choose 2}\setminus E(G) \right| + \left|\left({B\setminus B' \choose 2}\!\sqcup (B'\hat{\times}B)\right)\setminus E(G) \right| \nonumber \\
    &= {r_2-|I|\choose 2} - c + \left|\left({B\setminus B' \choose 2}\sqcup (B'\hat{\times}B)\right)\setminus E(G) \right|. 
    \end{align}

    We now estimate the last term. First of all, it is readily seen that
    $${B\setminus B' \choose 2} \sqcup (B' \hat{\times}B) = \bigcup_{x \in B \setminus B'} \left\{\{x, y\}: y \in B \setminus\{x\} \right\}.$$
    Recalling definitions of $B$ and $B'$, we obtain that $B\setminus B' = \bigcup_{i=r_2-d}^{r_2-1}(B_i\setminus B'_i)$. Thus, 
    $${B\setminus B' \choose 2} \sqcup (B'\hat{\times}B) = \bigcup_{i=r_2-d}^{r_2-1} ~ \bigcup_{x \in B_i\setminus B'_i}\left\{\{x, y\}: y \in B\setminus\{x\}\right\}.$$
    Hence, we obtain that
    \[
      \left|\left({B\setminus B' \choose 2} \!\sqcup\! (B'\hat{\times}B)\right)\!\setminus\! E(G)\right| \leq \sum_{i=r_2-d}^{r_2-1} \,\sum_{x \in B_i \setminus B_i'}\! |\{\{x, y\}: y \in B \setminus \{x\}\} \setminus E(G)|.
    \]

    Also, for $x \in B_i \setminus B'_i$, we have $\deg_{N[v]}(x) = i$. Hence,
    \begin{align*}    
     \left|\left\{\{x, y\}: y \in B\setminus\{x\}\right\}\setminus E(G)\right| &= \left|\left\{y\in B\setminus\{x\}: xy\notin E(G)\right\}\right| \\ &\leq
     \left|\left\{y\in N[v]\setminus\{x\}: xy\notin E(G)\right\}\right| \\ &= \deg_{\overline{N[v]}}(x) = r_2 - \deg_{N[v]}(x) = r_2 - i.
    \end{align*}
    
    Thus, we finish the calculations using~\eqref{Bto2c} and~\eqref{trBwI}
     \begin{align}\label{upperBound2c-b}
      \left|\left({B\setminus B' \choose 2}\sqcup (B'\hat{\times}B)\right)\setminus E(G)\right| 
      &\leq \sum_{i=r_2-d}^{r_2-1} (r_2-i) |B_i\setminus B_i'| \nonumber\\ &=
      \sum_{i=r_2-d}^{r_2-1} (r_2 - i)|B_i| - \sum_{i=r_2-d}^{r_2-1}(r_2 - i)|B_i'|
      \nonumber\\ &= 2c - (b_{r_2-1}+2b_{r_2-2}+\ldots+d b_{r_2-d}).
    \end{align}
    
    Therefore, substituting~\eqref{upperBound2c-b} into \eqref{trB'wB'}, we get
    \begin{equation}\label{trBwB}
    |\triangle B'wB'| \leq {r_2-|I| \choose 2} + c - (b_{r_2-1}+2b_{r_2-2}+\ldots+d b_{r_2-d}).
    \end{equation}

    Thus, we have estimated all three triangle types. Summing up inequalities~\eqref{trIwI},~\eqref{trBwI} and~\eqref{trBwB}, we get an upper bound
    \begin{align*}
    K_3 \deg(w) &=|\triangle IwI| + |\triangle B'wI | + |\triangle B'wB' | \\ 
    &\leq 
    \begin{aligned}[t]
    {|I| \choose 2} 
        &+ b_{r_2-2} + 2b_{r_2-3} + \ldots + (d-1) b_{r_2-d} \\ 
    &+ {r_2-|I| \choose 2} + c - (b_{r_2-1}+2b_{r_2-2}+\ldots+d b_{r_2-d}).
    \end{aligned}
    \end{align*}

    Simplifying, we get
    \begin{align*}
    K_3 \deg(w) &\leq {|I| \choose 2} 
     + {r_2-|I| \choose 2} + c - \underbrace{(b_{r_2-1} + b_{r_2-2} + \ldots + b_{r_2-d}}_{\text{this equals } r_2 - |I| \text{ by \eqref{eqr2i} } }) \\
     &={|I| \choose 2} + {r_2-|I| \choose 2} + c-r_{2}+|I|.
    \end{align*}

    Define the function
    \[F(x) = {x \choose 2} + {r_2 - x \choose 2} + c -  r_2 + x = x^2 - x(r_2 - 1) + c + \frac{ r_2^2 - 3r_{2} }{2}.\]

   Now, let us derive stricter constraints on $x:=|I|$. Starting with an upper bound \[x \leq |I| + b_{r_2-1}+\ldots+b_{r_2-d}-1 = r_2-1.\]
    And in a similar manner, we derive a lower bound 
    \begin{align}
      x &\geq |I| - (b_{r_2-2} + 2b_{r_2-3} + \ldots + (d-1)b_{r_2-d}) \\ &= r_2 - (b_{r_2-1}+2b_{r_2-2} + \ldots + db_{r_2-d})\\ &\geq r_2 - 2c.  
    \end{align}

    Hence, we see that
    \[ \max K_3\deg(w)  \leq \max \big\{ F(x): r_2 - 2c \leq x \leq r_2-1\big\}.  \]

    We now determine the maximum of $F$ on this interval. First of all, $F$ is a quadratic function with a positive leading coefficient. Therefore, the maximum is attained at one of the endpoints, namely either $x=r_2-1$ or $x=r_2-2c$
    \[F(r_2-1) - F(r_2-2c) = (r_2-2c)(2c-1)\geq1.\] 
    Thus, the maximum value is $F(r_2-1)={r_2-1 \choose 2} + c-1$. For \(1\le c\le \frac{r_2-1}{2}\), we have
\[
F(r_2-1) < \binom{r_2}{2}-c,
\]
which yields the contradiction
\[K_3 \deg(w) \leq F(r_2-1) < {r_2 \choose 2} - c = r_3.\qedhere\]
\end{proof}

We can summarise the findings of this subsection in the following two corollaries.

\begin{corollary}\label{cor:5.1}
     Let $r_2 \ge 4$ be an even integer. Then there are no graphs with parameters $(r_2,r_3)$, where $4{0.5 r_2\choose 2}<r_3<{r_2\choose 2}$ or ${r_2-1\choose 2}<r_3<4{0.5r_2\choose 2}$.  
    Moreover, one example of a graph that corresponds to $r_3 = {r_2\choose 2}$ is $K_{r_2+1}$, an example that corresponds to $r_3 = 4{0.5r_2\choose 2}$ is $\operatorname{Turan}(r_2+2, 0.5r_2+1)$, and an example that corresponds to $r_3 = {r_2-1\choose 2}$ is $K_{r_2}\square K_2$.      
\end{corollary}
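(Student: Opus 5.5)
The plan is to derive both non-existence ranges from Proposition~\ref{Cr:noDown} and Theorem~\ref{Pr:noDown} by rewriting the endpoints. Put $r_2 = 2m$ with $m\ge 2$. Then
\[
4\binom{m}{2} = 2m(m-1) = r_2^2/2 - r_2,\qquad
\binom{r_2}{2} - 4\binom{m}{2} = m = \tfrac{r_2}{2},\qquad
4\binom{m}{2} - \binom{r_2-1}{2} = m-1 = \tfrac{r_2-2}{2}.
\]

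For the upper range, write $r_3=\binom{r_2}{2}-c$. The condition $4\binom{m}{2}<r_3<\binom{r_2}{2}$ is equivalent to $1\le c\le m-1 = \frac{r_2-2}{2}$. Since $\frac{r_2-2}{2}\le\frac{r_2-1}{2}$ and $r_2\ge 4\ge 3$, Theorem~\ref{Pr:noDown} applies directly.

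For the lower range, write $r_3=\binom{r_2-1}{2}+c$. The condition $\binom{r_2-1}{2}<r_3<4\binom{m}{2}$ is equivalent to $0<c<\frac{r_2-2}{2}$, which is exactly the hypothesis of Proposition~\ref{Cr:noDown}. That proposition requires $r_2>4$. When $r_2=4$ the lower range is $3<r_3<4$, which contains no integer, so the case is vacuous.

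It then remains to verify the three examples.
\begin{itemize}
\item $K_{r_2+1}$ is $r_2$-regular, and each vertex lies in $\binom{r_2}{2}$ triangles.
\item $\operatorname{Turan}(r_2+2,m+1)$ is the cocktail party graph with parts of size $2$, obtained by removing a perfect matching from $K_{2m+2}$. By the formulas in Section~\ref{sect-defs-prelim} with $n/r=2$ and $r=m+1$, we get $r_2'=2m$ and $r_3=4\binom{m}{2}$. The formula needs $r\ge 3$, which holds since $m\ge 2$.
\item $K_{r_2}\square K_2$ has degree $(r_2-1)+1=r_2$, and its $K_3$-degree is $\binom{r_2-1}{2}+0$ by additivity under the Cartesian product, since $K_2$ contains no triangles.
\end{itemize}

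There is no real obstacle here; the only care needed is the boundary bookkeeping. The strict inequalities must translate exactly into the stated ranges of $c$, and the case $r_2=4$ must be handled, where Proposition~\ref{Cr:noDown} does not apply but the interval is empty.
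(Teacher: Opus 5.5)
Your proposal is correct and matches the paper's proof: Theorem~\ref{Pr:noDown} covers the upper range with $1\le c\le \frac{r_2-2}{2}$, and Proposition~\ref{Cr:noDown} covers the lower range with $0<c<\frac{r_2-2}{2}$. Your remark that the lower range is empty for $r_2=4$ is the same point as the paper's observation that $r_2>4$ in that case. Your explicit checks of the three examples (the Tur\'an formula with $n/r=2$, and additivity of the $K_3$-degree under the Cartesian product) simply fill in what the paper leaves as easy to verify.
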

\begin{proof}
    First of all, it is easy to verify that $K_{r_2+1}$, $\operatorname{Turan}(r_2+2, 0.5r_2+1)$ and $K_{r_2}\square K_2$ are the aforementioned examples.

    Now, assume the pair $(r_2, r_3)$ satisfies the first inequality. In this case, we use Theorem~\ref{Pr:noDown}. Indeed, we can represent $r_3$ that satisfies the inequality as ${r_2 \choose 2}-c$, where $c\in \{1, \ldots, {r_2\choose 2}-4{0.5r_2\choose 2}-1\}$. Let us simplify the value ${r_2\choose 2}-4{0.5r_2\choose 2}-1 = \frac{r_2-2}{2}$. So, we see that $c\geq 1$ and $r_2\geq2c+2>2c+1$. 
    Thus, the aforementioned theorem applies.
    
    Now, let $(r_2, r_3)$ satisfy the second inequality. Observe that in this case $r_2>4$. Hence, using Proposition~\ref{Cr:noDown}, we see that there are no graphs with $r_3 = {r_2 - 1\choose 2}+c$, $c\in \{1, \ldots, 0.5 r_2 - 2\}$. But $4{0.5 r_2\choose 2} - 1 - {r_2 - 1 \choose 2} = 0.5r_2 - 2$, so we get that there are no graphs with ${r_2-1\choose 2}<r_3<4{0.5r_2\choose 2}$.
\end{proof}

\begin{corollary}\label{cor:5.2}
    Let $r_2\ge 3$ be an odd integer. Then there are no graphs with parameters $(r_2, r_3)$, where ${r_2-1\choose 2}<r_3<{r_2\choose 2}$. Moreover, one example of a~graph that corresponds to $r_3 = {r_2\choose 2}$ is $K_{r_2+1}$ and an example that corresponds to $r_3 = {r_2-1\choose 2}$ is $K_{r_2}\square K_2$.
\end{corollary}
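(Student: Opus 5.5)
For odd $r_2\ge 3$, the gap $\binom{r_2-1}{2}<r_3<\binom{r_2}{2}$ has length $\binom{r_2}{2}-\binom{r_2-1}{2}-1=r_2-2$. I would cover it by two ranges, mirroring Corollary~\ref{cor:5.1}. The upper part comes from Theorem~\ref{Pr:noDown} with $1\le c\le \frac{r_2-1}{2}$, which excludes $\binom{r_2}{2}-\frac{r_2-1}{2}\le r_3\le \binom{r_2}{2}-1$. The lower part comes from Proposition~\ref{Cr:noDown} with $0<c<\frac{r_2-2}{2}$; for odd $r_2$ this means $1\le c\le \frac{r_2-3}{2}$, so it excludes $\binom{r_2-1}{2}+1\le r_3\le \binom{r_2-1}{2}+\frac{r_2-3}{2}$.

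Next I check that the two ranges meet. The largest $r_3$ excluded by the proposition is $\binom{r_2-1}{2}+\frac{r_2-3}{2}$. The smallest excluded by the theorem is $\binom{r_2}{2}-\frac{r_2-1}{2}=\binom{r_2-1}{2}+(r_2-1)-\frac{r_2-1}{2}=\binom{r_2-1}{2}+\frac{r_2-1}{2}$. These differ by exactly $1$, so the two ranges together cover every integer strictly between $\binom{r_2-1}{2}$ and $\binom{r_2}{2}$.

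The proposition needs $r_2>4$, so for odd $r_2$ the small case $r_2=3$ has to be handled separately. There the gap is $1<r_3<3$, i.e. $r_3=2$, which is $\binom{3}{2}-1$ and is excluded by Theorem~\ref{Pr:noDown} with $c=1\le 1$. More generally, when the proposition's range is empty, the theorem alone covers the gap; the coverage computation above still applies.

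For the examples, $K_{r_2+1}$ is $r_2$-regular with every vertex in $\binom{r_2}{2}$ triangles. For $K_{r_2}\square K_2$, each vertex has degree $(r_2-1)+1$, and its triangles all lie in its $K_{r_2}$ copy because $K_2$ contributes no triangles and triangle degree is additive under the Cartesian product. This gives $\binom{r_2-1}{2}$. The only delicate point is the boundary bookkeeping for odd $r_2$ (strict versus non-strict inequalities in $c$), which the coverage check above settles.
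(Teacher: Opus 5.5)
Your proof is correct and follows essentially the same route as the paper. Theorem~\ref{Pr:noDown} excludes $r_3=\binom{r_2}{2}-c$ for $1\le c\le\frac{r_2-1}{2}$, which also settles $r_2=3$; Proposition~\ref{Cr:noDown} excludes $r_3=\binom{r_2-1}{2}+c$ for $1\le c\le\frac{r_2-3}{2}$ when $r_2\ge 5$; your check that the two ranges meet is the paper's identity $r_2-2=\frac{r_2-1}{2}+\frac{r_2-3}{2}$, and the two examples are verified the same way.
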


\begin{proof}
    It is easy to verify that graphs $K_{r_2+1}$ and $K_{r_2}\square K_2$ are the aforementioned examples.
    
    Using Theorem~\ref{Pr:noDown}, we see that there are no graphs with $r_3={r_2 \choose 2}-c, c\in \{1, \ldots, \frac{r_2 - 1}{2}\}$. In particular, this works for $r_2=3$ (in which case $r_3=2={r_2 \choose 2}-1$). And if $r_2\ge 5$, by virtue of Proposition~\ref{Cr:noDown}, we see that there are no graphs with $r_3={r_2 - 1 \choose 2} + c, c\in \{1, \ldots, \frac{r_2 - 3}{2}\}$. Observing that ${r_2 \choose 2} - 1 - {r_2 - 1 \choose 2} = r_2 - 2 = \frac{r_2 - 1}{2} + \frac{r_2 - 3}{2}$, finishes the proof.
\end{proof}

\section{Tur\'an graphs}\label{sect-Turan}

In this section, we prove two results showing that the graphs $\operatorname{Turan}(2m+2, m+1)$ and $\operatorname{Turan}(3m+3, m+1)$ are uniquely determined by their regularity parameters. We also show that some parameter pairs are not admissible.

\begin{theorem}\label{cor:5.3}
    The graph $\operatorname{Turan}(2m+2, m+1)$ is the unique connected graph with parameters $\left(2m, 4{m \choose 2}\right)$, where $m\geq 2$.
\end{theorem}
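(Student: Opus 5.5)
The plan is to show that every edge has $K_3$-degree exactly $2m-2$. After that, the closed neighbourhood of any vertex, together with a single extra vertex, will be forced to be the cocktail party graph $K_{2m+2}$ minus a perfect matching, which is $\operatorname{Turan}(2m+2,m+1)$. Connectedness then gives equality with $G$.

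Existence and parameters are already covered: $\operatorname{Turan}(2m+2,m+1)$ is connected and, by Corollary~\ref{cor:5.1}, has parameters $(2m,4{m\choose 2})$. For uniqueness, let $G$ be connected with $r_2=2m$ and $r_3=2m(m-1)$, and fix an edge $e$ with $k=K_3\deg(e)$.

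\emph{Every edge has $K_3$-degree $2m-2$.} Lemma~\ref{K3_edge_lower_bound} gives $4m(m-1)\le (2m-k-1)(2m-2)+k(k+1)$. Rearranging, this is $k^2-(2m-3)k-(2m-2)\ge 0$, which factors as $(k-(2m-2))(k+1)\ge 0$. Since $k\ge 0$, we get $k\ge 2m-2$, and Lemma~\ref{K3_edge_upper_bound} caps $k$ at $2m-1$. Now apply Lemma~\ref{K3_deg_eq} at a vertex $v$: the $2m$ edge degrees at $v$ sum to $2r_3=2m(2m-2)$. Each term is at least $2m-2$, so every edge at $v$ has $K_3$-degree exactly $2m-2$.

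\emph{Local structure.} Fix $v$ and $u\in N(v)$. Since $K_3\deg(uv)=2m-2$, $u$ has exactly $2m-2$ neighbours in $N(v)$. Hence $u$ has exactly one non-neighbour $u'$ in $N(v)\setminus\{u\}$, and exactly one neighbour $w_u$ outside $N[v]$ (count: $1+(2m-2)+1=2m$). Thus the non-edges inside $N(v)$ form a perfect matching $\{u,u'\}$.

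Next consider the edge $uw_u$. The common neighbours of $u$ and $w_u$ lie in $N(u)=\{v\}\cup(N(v)\setminus\{u,u'\})\cup\{w_u\}$. Since $v\not\sim w_u$, the only candidates are the $2m-2$ vertices of $N(v)\setminus\{u,u'\}$. Because $K_3\deg(uw_u)=2m-2$, all of them must be adjacent to $w_u$.

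Each $x\in N(v)\setminus\{u,u'\}$ has a unique outside neighbour, so $w_x=w_u$. Since $m\ge 2$, such an $x$ exists. Applying the same argument to $x$ shows $w_{u'}=w_x=w_u$. Hence there is a single vertex $w$ adjacent to all of $N(v)$.

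\emph{Conclusion.} As $\deg(w)=2m$, we have $N(w)=N(v)$. So $S=N[v]\cup\{w\}$ is closed under taking neighbours, and by connectedness $V(G)=S$. On these $2m+2$ vertices, the only non-adjacent pairs are $\{v,w\}$ and the matching pairs $\{u,u'\}$. Thus $\overline{G}$ is a perfect matching, i.e.\ $G\cong\operatorname{Turan}(2m+2,m+1)$.

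The main obstacle is the first step: one needs the edge inequality to factor so cleanly that every edge degree is pinned to $2m-2$. After that, the rest is a short local argument. Its one subtle point is the propagation of $w_u$ to the matched partner $u'$, which is exactly where the hypothesis $m\ge 2$ is used.
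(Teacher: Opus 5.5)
Your proof is correct. The first half matches the paper's argument: every edge has $K_3$-degree exactly $2m-2$, via Lemmas~\ref{K3_edge_upper_bound}, \ref{K3_edge_lower_bound} and~\ref{K3_deg_eq}. Your factorisation $(k-(2m-2))(k+1)\ge 0$ is just a tidier way of solving the same quadratic inequality.

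The routes diverge in passing from local to global structure. The paper stops once it knows that $G(v)$ is a $(2m-2)$-regular graph on $2m$ vertices, i.e.\ the cocktail party graph $K_{m\times 2}$, and then simply asserts that $G$ must be $\operatorname{Turan}(2m+2,m+1)$. This implicitly uses the local-to-global characterisation of locally complete multipartite graphs (Proposition~\ref{pr:forfinal}), which the paper only invokes later, for Theorem~\ref{th:K333}.

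You prove this step directly and elementarily. You also use the edge-degree information on the edges $uw_u$ that leave $N[v]$, and show that all vertices of $N(v)$ share one outside neighbour $w$. So $w$ is a non-adjacent twin of $v$, the set $N[v]\cup\{w\}$ is closed under taking neighbours, and connectedness finishes the proof.

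Your version is self-contained and shows exactly where connectedness and the hypothesis $m\ge 2$ enter. For $m=1$ the propagation to $u'$ breaks down, consistent with every cycle $C_n$, $n\ge 4$, having parameters $(2,0)$. The paper's version is shorter and follows the same template as Lemma~\ref{lemma:turan}, but leaves this step implicit.

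One small point to state explicitly in your propagation step: $u'\notin\{x,x'\}$, because $x\neq u$ and the non-edges inside $N(v)$ form a perfect matching. This is what guarantees that the argument applied to $x$ actually reaches $u'$.
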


\begin{proof}
Let $G$ be a graph with the required parameters. We determine all possible values of $d=K_3\deg(e)$ for $e \in E(G)$ using Lemmas~\ref{K3_edge_upper_bound} and~\ref{K3_edge_lower_bound}
\begin{align*}
    8{m \choose 2} &\leq (2m-1-d)(2m-2) + d(d+1),\\
    d&\leq 2m-1,\\
    d&\in \mathbb{N}\cup\{0\}.
\end{align*}

Thus, $K_3 \deg(e) = d \in \{2m-2, 2m-1\}$. To determine which value actually occurs, we apply Lemma~\ref{K3_deg_eq} and get
\begin{align*}
    8{m \choose 2} &= (2m-2)a+(2m-1)b,\\
    a+b&=2m,
\end{align*}
for some non-negative integer $a$ and $b$.
Observe that $(2m-2)a+(2m-1)b \geq (2m-2)2m = 8{m \choose 2}$, thus $K_3\deg(e)=2m-2$ for all $e \in E(G)$. 

Fix an arbitrary vertex $v \in V(G)$ and consider $G(v)$. We know that $\deg_{G(v)}(u) = K_3\deg(uv)$ for all $u \in N(v)$. Thus $G(v)$ is a $(2m-2)$-regular graph with $2m$~vertices. Hence $G$ must be $\operatorname{Turan}(2m+2, m+1)$.
\end{proof}

Using a similar, but more involved, technique we prove the following result.

\begin{theorem}\label{th:K333}
    Let $r_2\geq7$. If $3 \mid r_2$, then there exists a unique connected graph with parameters $\left(r_2, \frac{r_2(r_2-3)}{2}\right)$, namely $\operatorname{Turan}(r_2+3, \frac{r_2}{3}+1)$.
    If $3 \nmid r_2$, no graph with parameters $\left(r_2, \frac{r_2(r_2-3)}{2}\right)$ exists. 
\end{theorem}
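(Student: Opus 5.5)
The plan follows the proof of Theorem~\ref{cor:5.3}: first pin down the edge $K_3$-degrees, then read off the structure of the neighbourhood graphs $G(v)$, then reconstruct $G$. Write $n=r_2\ge 7$ and $r_3=n(n-3)/2$.

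\textbf{Step 1 (admissible edge degrees).} Let $k=K_3\deg(e)$. Lemma~\ref{K3_edge_lower_bound} gives $n(n-3)\le (n-k-1)(n-2)+k(k+1)$. Since $(n-1)(n-2)-n(n-3)=2$, this simplifies to
\[
k^2-(n-3)k+2\ge 0 .
\]
At $k=1$ and at $k=n-4$ the left side equals $6-n<0$. Hence for $n\ge7$ the inequality fails for every $1\le k\le n-4$. Together with Lemma~\ref{K3_edge_upper_bound}, this leaves
\[
k\in\{0,\,n-3,\,n-2,\,n-1\}.
\]
By Lemma~\ref{K3_deg_eq}, the edge degrees at each vertex sum to $2r_3=n(n-3)$, so their average is exactly $n-3$. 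Therefore, if no edge has degree $0$, every edge has degree exactly $n-3$.

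\textbf{Step 2 (excluding degree-$0$ edges).} Suppose $uv$ has $K_3\deg(uv)=0$. Then $u$ is isolated in $G(v)$, so the other $n-1$ vertices of $N(v)$ carry $n(n-3)/2=\binom{n-1}{2}-1$ edges. Thus $G(v)-u$ is $K_{n-1}$ minus one edge.

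Take $x\in N(v)$ not incident to that missing edge. Its neighbourhood contains $v$ and the $n-2$ vertices of $N(v)\setminus\{u,x\}$, which span a near-clique, plus one extra vertex $y\notin N[v]$. Counting edges inside $N(x)$ gives $K_3\deg(x)\ge\binom{n-1}{2}-1=r_3$. Equality must hold, which forces $y$ to be isolated in $G(x)$ and hence $K_3\deg(xy)=0$.

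I would then propagate this rigidity. Comparing the neighbourhood of $x$ with that of a second such vertex $x'$, which has the same near-clique, should yield a contradiction with the degree count, with $n$-regularity, or with the symmetric analysis at $u$. So all edge degrees equal $n-3$.

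\textbf{Step 3 (local structure).} Now every $G(v)$ is an $(n-3)$-regular graph on $n$ vertices, so its complement $\overline{G(v)}$ is $2$-regular, i.e.\ a disjoint union of cycles. Fix $x\in N(v)$. It has exactly two non-neighbours $a,b$ in $N(v)$, hence exactly two neighbours $p,q\notin N[v]$.

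In $G(x)$, the vertex $v$ is non-adjacent exactly to $p$ and $q$. Since $\overline{G(x)}$ is also $2$-regular, $p$ and $q$ each have one further non-neighbour in $N(x)$, and every vertex of $N(v)\setminus\{x,a,b\}$ has its two non-neighbours inside $N(x)$. The goal is to compare $\overline{G(v)}$ and $\overline{G(x)}$ on their common part $N(v)\setminus\{x,a,b\}$ and show:
\begin{itemize}
\item $a$ and $b$ are non-adjacent to each other, so every cycle of $\overline{G(v)}$ is a triangle;
\item $p$ and $q$ are adjacent to all of $N(v)\setminus\{x,a,b\}$.
\end{itemize}
This comparison is the main obstacle. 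I would first try double-counting the non-edges between $N(v)\setminus\{x,a,b\}$ and $\{p,q\}$, using that each such vertex has exactly two non-neighbours in every neighbourhood that contains it.

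\textbf{Step 4 (global reconstruction).} Once each $\overline{G(v)}$ is a disjoint union of triangles, the relation ``$x=y$ or $xy\notin E(G)$'' is transitive locally, and connectivity makes it an equivalence relation with classes of size $3$. Therefore $\overline{G}$ is a disjoint union of triangles, i.e.\ $G$ is complete multipartite with parts of size $3$. Each vertex then has $n$ neighbours and $2$ non-neighbours, so $|V(G)|=n+3$, which forces $3\mid n+3$, hence $3\mid n$, and $G=\operatorname{Turan}(n+3,\tfrac n3+1)$.

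If $3\nmid n$ this is impossible, which gives non-existence. Disconnected graphs are excluded as well, since each component would be such a Tur\'an graph and would already require $3\mid n$. Conversely, $\operatorname{Turan}(n+3,\tfrac n3+1)$ has $r_3=\binom{n/3}{2}\cdot 9=n(n-3)/2$ by the formula in Section~\ref{sect-defs-prelim}.
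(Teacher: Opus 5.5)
\textbf{Verdict: there is a genuine gap.} Your Step~1 is correct and coincides with the paper's Lemma~\ref{lem:typesk3edges}. Steps~2 and~3, however, are stated as intentions rather than arguments, and the strategy you sketch for Step~3 cannot succeed as described.

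\textbf{Step 3.} This step is the heart of the theorem (the paper's Lemma~\ref{lemma:finallocat}), and comparing $G(v)$ with $G(x)$ alone never excludes a long cycle. Suppose the cycle of $\overline{G(v)}$ through $x$ is $a\,x\,b\,c_4\cdots c_m$ with $m\ge 4$, and put $W=N(v)\setminus\{x,a,b\}$. Then $G(x)$ is perfectly $(n-3)$-regular under the following choice:
\begin{itemize}
\item $p\sim q$;
\item $p\not\sim c_4$ and $q\not\sim c_m$ (for $m=4$: $p,q\not\sim c_4$);
\item $p$ and $q$ are adjacent to every other vertex of $W$.
\end{itemize}
In this configuration $\overline{G(x)}$ simply contains the cycle $v\,p\,c_4\cdots c_m\,q$, of the same length $m$, in place of the old one. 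Your double count of non-edges between $W$ and $\{p,q\}$ balances, with both sides equal to $2$. So no contradiction can come from $v$ and $x$ alone; it must come from a third neighbourhood.

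The paper takes $z\in N(v)$ on a different cycle of $\overline{G(v)}$, or $z=c_7$ if $\overline{G(v)}$ is a single $n$-cycle (which needs $n\ge 7$). Then $b$, $c_4$, $p$ and the other cycle-neighbour of $c_4$ (namely $c_5$, or $a$ if $m=4$) all lie in $N(z)$. But $c_4$ is non-adjacent to $b$, to that cycle-neighbour, and to $p$. Hence $\deg_{G(z)}(c_4)\le n-4$, contradicting the $(n-3)$-regularity of $G(z)$.

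\textbf{Step 2.} This step is likewise unfinished. The comparison you suggest, between two vertices $x,x'$ off the missing edge $w_1w_2$, gives nothing new. The data at $v$ and at all such $x$ are mutually consistent; the isolated vertices $y_x$ merely have to be pairwise distinct.

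The vertex to examine is an endpoint, say $w_1$. We have $N(w_1)=\{v\}\cup\bigl(N(v)\setminus\{u,w_1,w_2\}\bigr)\cup\{a',b'\}$ with $a',b'\notin N[v]$. Since each $y_x$ is isolated in $G(x)$ and $w_1\in N(x)$, we get $y_x\not\sim w_1$, so $a',b'\ne y_x$. Therefore $a'$ and $b'$ have no neighbours among $v$ and the vertices $x$. In $G(w_1)$ they have degree at most $1$, while the remaining $n-2$ vertices have degree exactly $n-3$. This gives $2K_3\deg(w_1)\le (n-2)(n-3)+2<n(n-3)$, a contradiction, using exactly the rigidity you already found. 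The paper's Lemma~\ref{lem:regular} reaches the same point by a case analysis.

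\textbf{Step 4.} This step is right in substance: it is the content of Proposition~\ref{pr:forfinal}, which the paper simply cites. However, ``transitive locally'' needs one more line. Two vertices in the same part of $G(v)$ have the same two neighbours outside $N[v]$, because a vertex of another part is adjacent to both pairs but has only two neighbours outside $N[v]$. Consequently those two outside vertices have neighbourhood exactly $N(v)$, which gives the equivalence relation you want.
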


The proof of this theorem proceeds in several steps. We start by determining all possible edge $K_3$-degrees for graphs with these parameters. Next we analyse the local structure of the graph by studying the neighbourhoods $G(v)$ of vertices. This analysis leads to a characterisation of the corresponding Tur\'an graphs.
 
\begin{lemma}\label{lem:typesk3edges}
    Let  $G$ be a graph  with parameters $\left(r_2, \frac{r_2(r_2-3)}{2}\right)$, $r_2\geq 7$. Then for every edge $e \in E(G)$, we have 
    \[K_3\deg(e) \in \{0, r_2-3, r_2-2, r_2-1\}.\]
\end{lemma}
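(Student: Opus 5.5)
The plan is to combine Lemma~\ref{K3_edge_lower_bound} with Lemma~\ref{K3_edge_upper_bound}, exactly as in the proofs of Proposition~\ref{Cr:noDown} and Theorem~\ref{cor:5.3}. This reduces the statement to an elementary analysis of a quadratic in $k = K_3\deg(e)$.

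Fix an edge $e$ and put $k=K_3\deg(e)$. Substituting $2r_3 = r_2(r_2-3)$ into Lemma~\ref{K3_edge_lower_bound} gives
\[
r_2(r_2-3) \le (r_2-k-1)(r_2-2)+k(k+1) = (r_2-1)(r_2-2) + k^2 - (r_2-3)k .
\]
Since $r_2(r_2-3)-(r_2-1)(r_2-2) = -2$, this is equivalent to
\[
f(k) := k^2-(r_2-3)k+2 \ge 0 .
\]
Lemma~\ref{K3_edge_upper_bound} gives the other constraint, $0\le k\le r_2-1$.

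The key step is to locate the integers where $f$ is negative. The function $f$ is convex, so it suffices to check the two endpoints of the forbidden range, which I will do by direct evaluation:
\[
f(1) = 6-r_2 < 0 \quad\text{and}\quad f(r_2-4) = -(r_2-4)+2 = 6-r_2 < 0 \qquad (r_2\ge 7).
\]
By convexity, $f(k)<0$ for every integer $1\le k\le r_2-4$, so all these values of $k$ are excluded.

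It remains to check the surviving values. We have $f(0)=2\ge 0$ and $f(r_2-3)=2\ge 0$, and $f$ is increasing beyond its vertex at $(r_2-3)/2$, so $f(k)\ge 0$ for every $k\ge r_2-3$. Together with $k\le r_2-1$, this leaves exactly
\[
k\in\{0,\,r_2-3,\,r_2-2,\,r_2-1\}.
\]

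There is no real obstacle here. The only point needing care is the hypothesis $r_2\ge 7$: it is exactly what makes $f(1)<0$ and $f(r_2-4)<0$. For $r_2=6$ the intermediate values would not be excluded, which explains the threshold in the statement.
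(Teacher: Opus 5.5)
Your proof is correct and follows the paper's route: combine Lemma~\ref{K3_edge_upper_bound} and Lemma~\ref{K3_edge_lower_bound}, then solve the resulting quadratic inequality in $k$. Your reduction to $k^2-(r_2-3)k+2\ge 0$ and the endpoint check $f(1)=f(r_2-4)=6-r_2<0$ simply spell out the step the paper summarises as ``solving the inequality with respect to $k$''.
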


\begin{proof}
  Put $k = K_3\deg(e)$ for some edge $e$. By virtue of Lemmas~\ref{K3_edge_upper_bound}~and~\ref{K3_edge_lower_bound}, we have
\begin{align*}
    0&\leq k \leq r_2-1, \\
    r_2(r_2-3) &\leq (r_2-k-1)(r_2-2)+k(k+1).
\end{align*}
Solving the inequality with respect to $k$, we get $k \in \{0, r_2-3, r_2-2, r_2-1\}$.
\end{proof}

Next we analyse considering the possible structure of the neighbourhood $G(v)$.

\begin{lemma}\label{lem:gen_cl}
    Let $G$ be a graph  with parameters $\left(r_2, \frac{r_2(r_2-3)}{2}\right)$, $r_2\geq 7$. Then for every vertex $v \in V(G)$, the graph $G(v)$ is either $(r_2-3)$-regular or isomorphic to $(K_{r_2-1}-e)\cup K_1$.
\end{lemma}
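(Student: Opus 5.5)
Fix a vertex $v$ and put $H=G(v)$, a graph on $r_2$ vertices with $|E(H)|=r_3=\frac{r_2(r_2-3)}{2}$ edges. For every $u\in N(v)$ we have $\deg_H(u)=K_3\deg(uv)$. By Lemma~\ref{lem:typesk3edges}, every degree of $H$ therefore lies in $\{0,r_2-3,r_2-2,r_2-1\}$. Let $z,a,b,c$ be the numbers of vertices of $H$ with degrees $0,r_2-3,r_2-2,r_2-1$, respectively. Lemma~\ref{K3_deg_eq} (equivalently, the handshaking lemma in $H$) gives
\begin{align*}
z+a+b+c&=r_2,\\
(r_2-3)a+(r_2-2)b+(r_2-1)c&=r_2(r_2-3).
\end{align*}
Subtracting $(r_2-3)$ times the first equation from the second yields
\[
b+2c=3z .
\]
We now split according to $z$.

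\textbf{Case $z=0$.} Here $b+2c=0$, so $b=c=0$ and $H$ is $(r_2-3)$-regular, which is the first alternative.

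\textbf{Case $z\ge 1$.} Every non-isolated vertex of $H$ has degree at least $r_2-3$, yet it lives in $H$ minus the $z$ isolated vertices, which has only $r_2-z$ vertices. Hence $r_2-3\le r_2-z-1$, i.e.\ $z\le 2$, provided some non-isolated vertex exists. (If all vertices were isolated, then $r_3=0$, contradicting $r_2\ge 7$.) The same reasoning restricts degrees further: a vertex of degree $r_2-1$ needs $r_2-1$ neighbours, impossible once $z\ge1$, so $c=0$. Likewise a vertex of degree $r_2-2$ forces $z\le 1$.
\begin{itemize}
\item If $z=1$, then $b=3$ and $a=r_2-4$, all inside a graph on $r_2-1$ vertices. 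Degree $r_2-2$ means adjacent to all other non-isolated vertices, and degree $r_2-3$ means missing exactly one. The complement of $H$ restricted to the $r_2-1$ non-isolated vertices therefore has degrees $0$ (on $b=3$ vertices) and $1$ (on $r_2-4$ vertices), so it is a perfect matching on those $r_2-4$ vertices. This forces $r_2-4$ to be even.
\item If $z=2$, then $b+2c=6$ with $b=c=0$ is impossible.
\end{itemize}
These do not match the claimed $(K_{r_2-1}-e)\cup K_1$, so I must recheck: in $(K_{r_2-1}-e)\cup K_1$ the degrees are $r_2-2$ on $r_2-3$ vertices and $r_2-3$ on $2$ vertices. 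Its edge count is $\binom{r_2-1}{2}-1$, and this equals $\frac{r_2(r_2-3)}{2}$ exactly (since $\binom{r_2-1}{2}-1=\frac{r_2^2-3r_2}{2}$). So my degree bookkeeping above is off. I would redo it carefully: $\sum\deg = 2r_3=r_2(r_2-3)$. With $z=1$ the relation is $b+2c=3$, and the edge count of $K_{r_2-1}$ minus $m$ edges is $\binom{r_2-1}{2}-m$, so $m=1$ is forced. Then $H$ is exactly $(K_{r_2-1}-e)\cup K_1$.

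The key point in the cleaned-up argument is the edge count. With $z\ge 1$ isolated vertices, $|E(H)|\le\binom{r_2-z}{2}$, and $\binom{r_2-2}{2}<\frac{r_2(r_2-3)}{2}$ for all $r_2$, so $z=1$. Then $|E(H)|=\binom{r_2-1}{2}-1$ forces $H$ to be $K_{r_2-1}$ with one edge deleted, plus an isolated vertex. The main obstacle is simply getting the degree and edge bookkeeping right and excluding $z\ge2$; for that, the edge-count comparison above is the cleanest route.
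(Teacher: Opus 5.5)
Your final edge-count argument is correct and takes a mildly different route from the paper. However, the degree-count part of your write-up contains an algebra slip that you never locate. Subtracting $(r_2-3)$ times $z+a+b+c=r_2$ from the degree-sum equation gives $b+2c=(r_2-3)z$, not $b+2c=3z$. Your ``recheck'' still asserts $b+2c=3$ for $z=1$, so the discrepancy did not come from the edge bookkeeping but from this subtraction.

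With the correct relation, your first attempt already works. For $z=1$ you showed $c=0$, so $b=r_2-3$ and $a=2$. The complement of $H$ on the $r_2-1$ non-isolated vertices then has exactly two vertices of degree $1$ and all others of degree $0$, i.e.\ it is a single edge. For $z=2$ you showed $b=c=0$, and the relation reads $0=2(r_2-3)$, which is impossible. The bullet claiming a perfect matching on $r_2-4$ vertices, and the resulting parity condition on $r_2$, is an artefact of the slip and should be deleted. Your $z=0$ case is unaffected, since the coefficient of $z$ plays no role there.

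The no-isolated-vertex case coincides with the paper's. In the isolated-vertex case, the paper uses degree counts, essentially your corrected first attempt:
\begin{itemize}
\item it rules out universal vertices, so $c=0$;
\item it uses $(r_2-3)a+(r_2-2)b\le (r_2-2)^2<r_2(r_2-3)$ to force $a+b=r_2-1$;
\item it solves for $a=2$, $b=r_2-3$ and reads $G(v)$ off that degree sequence.
\end{itemize}
Your cleaned-up version instead observes that all $r_3$ edges lie on the $r_2-z$ non-isolated vertices. Since $\binom{r_2-2}{2}<r_3$, this forces $z=1$, and then $r_3=\binom{r_2-1}{2}-1$ edges on $r_2-1$ vertices means exactly one edge is missing. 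This is shorter, and in this case it needs neither Lemma~\ref{lem:typesk3edges} nor the degree-sum identity; they are only needed for $z=0$. It also yields the isomorphism type directly rather than through a degree sequence. For a final version, keep the $z=0$ case and the edge-count argument and discard the rest.
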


\begin{proof}
    We fix a vertex $v$ and apply Lemma~\ref{K3_deg_eq} to decompose edges incident to $v$ into four types according to Lemma~\ref{lem:typesk3edges}.  
    Namely, let $a$, $b$, and $c$ denote the numbers of edges incident to $v$ having $K_3$-degrees $r_2-3$, $r_2-2$, $r_2-1$, respectively. This way the other $r_2-a-b-c$ edges incident to $v$ have zero $K_3$-degrees.

    We have the following constraints: 
\begin{align*}
&r_2(r_2-3) = (r_2-3)a+(r_2-2)b+(r_2-1)c,\\
&a+b+c \leq r_2, \\
&a, b, c \in \mathbb{N}\cup\{0\}.
\end{align*}

Assume there are no edges of zero $K_3$-degree, i.e. $a+b+c=r_2$. Then $(r_2-3)a+(r_2-2)b+(r_2-1)c \geq (a+b+c)(r_2-3)=r_2(r_2-3) $, with attaining equality only when $a=r_2$, $b = c = 0$. This is the case when $G(v)$ is $(r_2 - 3)$-regular (note that the $K_3$-degree of an edge $vx$ in $G$ equals the usual degree of a vertex $x$ in $G(v)$).

Assume there are edges of zero $K_3$-degree, i.e. $a+b+c<r_2$. Then the subgraph $G(v)$ has an isolated vertex. Hence, $G(v)$ does not have universal vertices, implying that $c=0$.

If we assume that $a+b\leq r_2-2$, then 
\[(r_2-3)a+(r_2-2)b \leq (r_2-2)^2 < r_2(r_2-3).\] 
This contradiction asserts that $a+b=r_2-1$. 

Hence, we obtain the following system
\begin{equation*}
    \begin{cases}
        r_2-1 = a+b,\\
         r_2(r_2-3) = (r_2-3)a+(r_2-2)b.
    \end{cases}
\end{equation*}
From it we find that $a=2$, $b=r_2-3$.

Thus, summing it up, we see that we have two options so far, either there are no isolated vertices in $G(v)$ and every vertex is of degree $r_2-3$; or there are $2$ vertices of degree $r_2-3$, one isolated vertex, and $r_2-3$ vertices of degree $r_2-2$, which is equivalent of saying that $G(v)\cong (K_{r_2-1}-e)\cup K_1$.
\end{proof}

In the next step we show that only one of the possibilities from the previous lemma can occur.

\begin{lemma}\label{lem:regular}
    Let  $G$ be a graph  with parameters $\left(r_2, \frac{r_2(r_2-3)}{2}\right), r_2\geq 7$. Then for every vertex $v \in V(G)$, the subgraph $G(v)$ is $(r_2-3)$-regular.    
\end{lemma}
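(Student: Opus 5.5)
The plan is to argue by contradiction. Suppose some vertex $v$ has $G(v)\cong (K_{r_2-1}-e)\cup K_1$; by Lemma~\ref{lem:gen_cl} this is the only alternative to $G(v)$ being $(r_2-3)$-regular. Write $N(v)=\{x\}\cup S$, where $x$ is the isolated vertex of $G(v)$, $|S|=r_2-1$, and $S$ induces a clique minus the single edge $yz$. Set $S_0=S\setminus\{y,z\}$, so $|S_0|=r_2-3\ge 4$. The idea is to apply Lemma~\ref{lem:gen_cl} again at the vertices of $S$ and derive a contradiction, using only the local structure.

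\textbf{Step 1 (vertices of $S_0$).} Take $s\in S_0$. It is adjacent to $v$ and to all $r_2-2$ vertices of $S\setminus\{s\}$, so $N(s)=\{v\}\cup(S\setminus\{s\})\cup\{t_s\}$ for exactly one further vertex $t_s\notin N[v]$. The vertex $t_s$ is not $x$, because $x$ is isolated in $G(v)$. In $G(s)$ the vertex $v$ is adjacent to all of $S\setminus\{s\}$ and not to $t_s$, so its degree is $r_2-2\neq r_2-3$. Hence $G(s)$ is not $(r_2-3)$-regular, and by Lemma~\ref{lem:gen_cl} it is $(K_{r_2-1}-e)\cup K_1$. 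The isolated vertex cannot be $v$ or any vertex of $S\setminus\{s\}$, since these are adjacent to $v$. So $t_s$ is isolated in $G(s)$.

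Key consequence: $t_s$ has no neighbour in $S\setminus\{s\}$. In particular, distinct $s,s'\in S_0$ have $t_s\ne t_{s'}$, because otherwise $t_s$ would be adjacent to $s'$.

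\textbf{Step 2 (the vertex $y$).} Its neighbourhood is $N(y)=\{v\}\cup S_0\cup\{p,q\}$ with $p,q\notin N[v]$. In $G(y)$, each $s\in S_0$ is adjacent to $v$, to the $r_2-4$ other vertices of $S_0$, and possibly to $t_s$ if $t_s\in\{p,q\}$. Apply Lemma~\ref{lem:gen_cl} to $G(y)$.

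\emph{Case 1: $G(y)$ is $(r_2-3)$-regular.} Then every $s\in S_0$ needs exactly one neighbour in $\{p,q\}$, which must be $t_s$. So $s\mapsto t_s$ maps $S_0$ injectively into $\{p,q\}$, giving $r_2-3\le 2$. This contradicts $r_2\ge 7$.

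\emph{Case 2: $G(y)\cong(K_{r_2-1}-e)\cup K_1$.} Here $v$ has degree $r_2-3$ in $G(y)$ (its neighbours there are exactly $S_0$). The vertices of $S_0$ are not isolated, so the isolated vertex is $p$ or $q$, say $q$. Then $v$ is an endpoint of the missing edge, whose other endpoint must be $p$. Every other vertex of the clique part, in particular every $s\in S_0$, is adjacent to $p$. Since each $s\in S_0$ has exactly one neighbour outside $N[v]$, we get $t_s=p$ for all $s\in S_0$. This contradicts the injectivity from Step 1, because $|S_0|\ge 2$.

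Both cases are impossible, so no vertex has the exceptional neighbourhood, and every $G(v)$ is $(r_2-3)$-regular.

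The main obstacle is the bookkeeping in Step 2: identifying which vertex of $G(y)$ is isolated and which edge is missing. This relies on the observation that the unique outside neighbour $t_s$ of each $s\in S_0$ must be isolated in $G(s)$, which is what forces the injectivity.
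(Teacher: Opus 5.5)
\textbf{Gap in Case 1 of Step 2.} Your Step 1 and your Case 2 are correct, but Case 1 rests on a false count. In $G(y)$ every $s\in S_0$ is already adjacent to $v$ and to the $r_2-4$ other vertices of $S_0$. So $\deg_{G(y)}(s)\ge 1+(r_2-4)=r_2-3$, with equality exactly when $t_s\notin\{p,q\}$. If $G(y)$ is $(r_2-3)$-regular, each $s$ therefore needs \emph{zero} neighbours in $\{p,q\}$, not exactly one. The map $s\mapsto t_s$ into $\{p,q\}$ then does not exist, and the inequality $r_2-3\le 2$ is never obtained. As written, Case 1 is not refuted.

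\textbf{How to repair it.} The fix is short and removes the need for the case split. Your own consequence of Step 1 says $t_s$ has no neighbour in $S\setminus\{s\}$. Since $y\in S\setminus\{s\}$, this gives $t_s\notin N(y)$, i.e.\ $t_s\notin\{p,q\}$ for every $s\in S_0$. Now $t_s$ is the only neighbour of $s$ outside $N[v]$, and $v$ is adjacent to neither $p$ nor $q$. Hence $p$ and $q$ have no neighbours in $\{v\}\cup S_0$, so both have degree at most $1$ in $G(y)$. This contradicts both alternatives of Lemma~\ref{lem:gen_cl}:
\begin{itemize}
\item In the regular case all degrees equal $r_2-3\ge 4$.
\item In the exceptional case there is exactly one isolated vertex and every other degree lies in $\{r_2-3,r_2-2\}$. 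If $p\sim q$ both have degree $1$; if not, there are two isolated vertices.
\end{itemize}

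\textbf{Comparison with the paper.} This is exactly the paper's ``first option''. The paper examines $G(w_1)$ first and needs a second option: there a new vertex $a$ is adjacent to every $g_j$, which produces a universal vertex $g_2$ in $G(g_1)$. Your order of attack, analysing $G(s)$ for $s\in S_0$ before $G(y)$, already delivers $t_s\not\sim y$. So once the count is corrected, only the degenerate situation can occur at $G(y)$.
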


\begin{proof}
    Keeping in mind the previous lemma, let us assume that there is $v \in V(G)$ such that $G(v)\cong (K_{r_2-1}-e)\cup K_1$. Without loss of generality, we may assume that $V(G(v)) = \{u, w_1, w_2, g_1, \ldots, g_{r_2-3}\}$, where $u$ is the isolated vertex, $g_1, \ldots, g_{r_2-3}$ are vertices from $K_{r_2-1}-e$, and $w_1$ and $w_2$ are also from $K_{r_2-1} -e$ but they are not adjacent.

    Now, we should change our perspective to $G(w_1)$. We know that $$V(G(w_1))\cap V(G[v]) = \{v, g_1, \ldots, g_{r_2-3}\},$$
    as well as that $$\deg_{G(w_1)} (g_j) = |N(g_j)\cap V(G(w_1))| \geq |\{v, g_1, \ldots, g_{r_2-3}\}| - 1 = r_2-3.$$
    Which means that we need to introduce two new vertices, let us call them $a \in G(w_1)$ and $b \in G(w_1)$. 

    Now there are two possibilities. The first one, is that neither $a$ nor $b$ are adjacent to any of $g_j$, it would mean that $\deg_{G(w_1)}(a)$ and $\deg_{G(w_1)}(b)$ are simultaneously either $0$ or $1$, but according to Lemma~\ref{lem:gen_cl} neither options are possible. 
    
    The second option is that either $a$ or $b$ (or both) are adjacent to some $g_k$. But it would mean that $\deg_{G(w_1)} (g_k) > r_2-3$. Thus, $G(w_1)\cong (K_{r_2-1}-e)\cup K_1$ by Lemma~\ref{lem:gen_cl}. Without loss of generality, we shall assume that $b$ is isolated in $G(w_1)$, and therefore, $a$ is adjacent to every vertex $g_j$.

    Finally, let us shift our attention once again to  $G(g_1)$, we can see that $$\{a, v, w_1, w_2, g_2, \ldots, g_{r_2-3}\} =  V(G(g_1))$$ and that $\deg_{G(g_1)} (g_2) = r_2-1$, but it is not possible according to Lemma~\ref{lem:gen_cl}. 
\end{proof}

Having established the regularity of $G(v)$, we now determine its precise structure.

\begin{lemma}\label{lemma:finallocat}
    Let $G$ be a graph with parameters $\left(r_2, \frac{r_2(r_2-3)}{2}\right), r_2\geq 7$. Then for every vertex $v \in V(G)$, the subgraph $G(v)$ is the complement of a disjoint union of triangles. 
\end{lemma}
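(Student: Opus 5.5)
By Lemma~\ref{lem:regular}, $G(v)$ is $(r_2-3)$-regular on $r_2$ vertices. Its complement $H=\overline{G(v)}$ is therefore $2$-regular, i.e.\ a disjoint union of cycles. The plan is to show that every cycle of $H$ is a triangle, which is exactly the claim. Note first that every edge of $G$ has $K_3$-degree $r_2-3$, since by Lemma~\ref{lem:regular} each $G(v)$ is $(r_2-3)$-regular and $K_3\deg(vx)=\deg_{G(v)}(x)$. So $G$ is strongly locally regular: any two adjacent vertices have exactly $r_2-3$ common neighbours.

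Take $x\in N(v)$ and let $y,z$ be its two neighbours in $H$, i.e.\ its two non-neighbours inside $N(v)$. Then $N(x)$ consists of $v$, the $r_2-3$ vertices $N(v)\setminus\{x,y,z\}$, and exactly two further vertices $p,q\notin N[v]$. Now we compute the neighbourhood of $x$. We have $G(x)$ is $(r_2-3)$-regular on the vertex set $\{v,p,q\}\cup (N(v)\setminus\{x,y,z\})$. In $G(x)$ the vertex $v$ is adjacent to all of $N(v)\setminus\{x,y,z\}$ and not to $p,q$, so $v$ has degree exactly $r_2-3$, which is consistent. Consider $w\in N(v)\setminus\{x,y,z\}$. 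Its degree inside $N(v)\setminus\{x\}$ restricted to $N(v)\setminus\{x,y,z\}$, together with $v$, must combine with its adjacencies to $p,q$ to total $r_2-3$. Since $w$ misses exactly two vertices of $N(v)\setminus\{w\}$ and is adjacent to $x$, we get
\[
\deg_{G(x)}(w)=1+\bigl(r_2-4-|\{y,z\}\cap N(w)|\bigr)-(\text{non-nbrs of } w \text{ among } N(v)\setminus\{x,y,z\})+|\{p,q\}\cap N(w)|.
\]
A cleaner version: $w$ has exactly $2$ non-neighbours in $N(v)\setminus\{w\}$, none of which is $x$. Let $t_w$ be the number of them lying in $\{y,z\}$. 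Then $w$ has $2-t_w$ non-neighbours in $N(v)\setminus\{x,y,z,w\}$, so
\[
\deg_{G(x)}(w)=1+(r_2-4)-(2-t_w)+|\{p,q\}\cap N(w)| = r_2-5+t_w+|N(w)\cap\{p,q\}|.
\]
Setting this equal to $r_2-3$ gives $t_w+|N(w)\cap\{p,q\}|=2$ for every such $w$. Similarly, $\deg_{G(x)}(p)=r_2-3$ forces $|N(p)\cap(N(v)\setminus\{x,y,z\})|+[pq\in E]=r_2-3$, and the same holds for $q$.

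The main step is to combine these counts to force $yz\in E(H)$, i.e.\ $y$ and $z$ non-adjacent in $G$. Suppose the $H$-cycle through $x$ has length $\ell\ge4$. Summing $t_w$ over $w\in N(v)\setminus\{x,y,z\}$ counts the $H$-edges between $\{y,z\}$ and the rest. If $yz\notin E(H)$, then each of $y,z$ has one $H$-neighbour besides $x$, giving $\sum t_w=2$ (or $1$ if $y,z$ share that neighbour, i.e.\ $\ell=4$). If $yz\in E(H)$, then $\sum t_w=0$. Summing the identity gives
\[
\sum_w |N(w)\cap\{p,q\}| = 2(r_2-3)-\sum t_w.
\]
On the other side, the $p,q$ equations give $\sum_w|N(w)\cap\{p,q\}| = 2(r_2-3)-2[pq\in E]$. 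So $\sum t_w=2[pq\in E]\in\{0,2\}$, which excludes $\ell=4$ only. This is the expected obstacle: longer cycles are not yet excluded. To finish, I would apply the edge count to the edge $xp$, whose $K_3$-degree must be $r_2-3$. Its common neighbourhood lies in $\{q\}\cup(N(v)\setminus\{x,y,z\})$. Alternatively, one can use the fact that $G(x)$ itself has complement a union of cycles, and compare with the structure forced on $N(v)$: the non-edges of $G(x)$ inside $N(v)\setminus\{x,y,z\}$ are the $H$-edges there, and each vertex has $H$-degree $2-t_w$ there, while its remaining non-neighbours in $G(x)$ must be among $p,q$. For $\ell\ge5$, the vertices $y',z'$ adjacent in $H$ to $y,z$ have $t=1$, so they are adjacent to exactly one of $p,q$. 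Tracking which of $p,q$ they see, and applying the same identity at $v'=y$ (symmetry of roles), should propagate a parity or connectivity contradiction around the cycle, forcing $\ell=3$. Once every cycle of $H$ is a triangle, $G(v)$ is the complement of a disjoint union of triangles, and in particular $3\mid r_2$.
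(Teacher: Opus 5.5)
Your setup is sound and matches the paper's: your $p,q$ are its $a,b$. Your identities $t_w+|N(w)\cap\{p,q\}|=2$ and $|N(p)\cap W|+[pq\in E]=r_2-3$, with $W=N(v)\setminus\{x,y,z\}$, are correct. However, there is a genuine gap: the step that actually rules out $H$-cycles of length $\ell\ge4$ is never carried out. The partial conclusion you do draw is also wrong. When $\ell=4$, the common $H$-neighbour $w_0$ of $y$ and $z$ has $t_{w_0}=2$, so $\sum_w t_w=2$ for every $\ell\ge4$ (and $0$ for $\ell=3$). Hence $\sum_w t_w=2[pq\in E]$ excludes nothing. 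It only says that $pq\in E$ exactly when $\ell\ge4$, which is the paper's intermediate fact that $a$ and $b$ are adjacent. The edge count for $xp$ just restates $\deg_{G(x)}(p)=r_2-3$, and the appeal to a ``parity or connectivity contradiction around the cycle'' is not an argument.

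The missing idea is to pass to the neighbourhood of a third vertex, using that every edge $uw$ satisfies $|N(u)\setminus N[w]|=r_2-1-K_3\deg(uw)=2$. Your identities already supply what is needed.
\begin{itemize}
\item If $\ell\ge4$, then $pq\in E$. So each of $p,q$ has exactly one non-neighbour in $W$, and every $w\in W$ with $t_w=0$ is adjacent to both.
\item Let $y'\in W$ be the second $H$-neighbour of $y$, and $y''$ the other $H$-neighbour of $y'$ (so $y''=z$ when $\ell=4$). Since $t_{y'}\ge1$, one of $p,q$, say $p$, misses $y'$, and then $y'$ is its only non-neighbour in $W$.
\item So $y'$ has the three non-neighbours $y,y'',p$.
\item If $H$ has another cycle, any vertex $u$ on it is adjacent to $y,y',y''$. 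Having $t_u=0$, it is also adjacent to $p$.
\item If $H$ is a single cycle $c_1c_2\cdots c_{r_2}$ with $z=c_1$, $x=c_2$, $y=c_3$, $y'=c_4$, $y''=c_5$, take $u=c_7$ (this is where $r_2\ge7$ is used). It is adjacent to $c_3,c_4,c_5$, and to $p$ because $p$ misses only $c_4$ in $W$.
\end{itemize}
In both cases $N(u)\setminus N[y']\supseteq\{y,y'',p\}$, giving $K_3\deg(uy')\le r_2-4$, a contradiction. This is precisely the paper's argument, with $x=g_2^1$, $y'=g_4^1$, $p=a$, and $u=g_1^2$ or $g_7^1$.
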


\begin{proof}
By Lemma~\ref{lem:regular}, the graph $G(v)$ is $(r_2-3)$-regular. Hence, its complement is $2$-regular and therefore is a disjoint union of cycles. Hence, $\overline {G(v)} = C_{n_1}\cup\ldots \cup C_{n_k},$ where $n_i\geq 3$ and $n_1+\ldots+n_k = r_2$. 

Thus, we need to prove that there cannot be any index $i$ with $n_i>3$. By way of contradiction, without loss of generality, assume that $n_1>3$. 

Let us denote the vertices in $G(v)$ as follows \[V(G(v))=\{g_1^1, \ldots, g_{n_1}^1; \ldots; g_{1}^k, \ldots, g_{n_k}^k\},\]
in a way so that $g_i^s$, $1\leq i\leq n_s$ constitute a cycle $C_{n_s}$ in $\overline{G(v)}$ for a fixed $1\leq s\leq k$.

As before, let us shift our perspective to $G(g_2^1)$, we can see that 
$$V(G(g_2^1))\cap V(G[v]) = \{v, g_4^1, \ldots, g_{n_1}^1, \ldots, g_{n_k}^k\}.$$
Which means that we need to introduce exactly two new vertices $a$ and $b$. We note that $C_{n_2}, \ldots, C_{n_k}$ remain cycles in $\overline{G(g_2^1)}$, as such $a$ and $b$ must be adjacent to every $g_i^s, s>1$. We also note that $\{av, bv\} \cap E(G) = \emptyset$. By way of contradiction, let us assume that $a$ and $b$ are not adjacent in $G$, it would mean that $a, b, v$ constitute a cycle in $\overline{G(g_2^1)}$, which would mean that 
$$\deg_{G(g_2^1)} (g_4^1) \! \geq \!
\begin{cases}
    |\{a, b, v, g_6^1, \ldots, g_{n_1}^1, \ldots, g_{n_k}^k\}|  = 3+r_2-5=r_2-2, ~ n_1 \!\geq\! 6,\\
    |\{a, b, v, g_1^2, \ldots, g_{n_2}^2, \ldots, g_{n_k}^k\}| \geq 3+r_2-5=r_2-2, ~ n_1\!\leq\! 5,\\
\end{cases}$$
thus $G(g_2^1)$ would not be regular. Thus, $a$ and $b$ are adjacent in $G$, which leads to the conclusion, without loss of generality, that $a$ and $g_4^1$ are adjacent in $\overline{G(g_2^1)}$, and $b$ and $g_{n_1}^1$ are adjacent in $\overline{G(g_2^1)}$ (note $n_1$ might be $4$).

We need to consider two separate cases based on if there are more than one cycle in $G(v)$.

Assume that there are at least two cycles in $\overline{G(v)}$, then we can consider $G(g_1^2)$. We know that $$\deg_{G(g_1^2)} (g_4^1) \begin{cases}
    \leq r_2-1 - |\{a, b, g_1^1, g_3^1\}| = r_2-5, n_1=4,\\
    \leq r_2-1 - |\{a, g_5^1, g_3^1\}| = r_2-4, n_1\geq 5,
\end{cases}$$
thus $\deg_{G(g_1^2)} (g_4^1) < r_2-3$, which fails to make $G(g_1^2)$ $(r_2-3)$-regular.

Assume that there is only one cycle in $\overline{G(v)}$, it means that $G(v) = \overline{C_{n_1}}$ with $n_1\geq 7$, it means that we can consider $G(g_7^1)$. We see that $$\deg_{G(g_7^1)} (g_4^1) \leq r_2-1 - |\{a, g_3^1, g_5^1\}| = r_2-4,$$
which means that $G(g_7^1)$ cannot be $(r_2-3)$-regular.
\end{proof}

To complete the proof of Theorem~\ref{th:K333}, we use a characterisation of Tur\'an graphs determined by their local structure.

\begin{proposition}[{\cite[Proposition 1.1.5]{BCN:89}}]\label{pr:forfinal}
    Let $G$ be a connected graph that is locally complete multipartite. Then $G$ is either triangle-free or complete multipartite. In particular, if $G$ is locally $K_{m_1, \ldots, m_k}$ then all $m_i$ are equal (to $m$, say), and $G \cong K_{(k+1)\times m}$.
\end{proposition}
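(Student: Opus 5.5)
The statement immediately above is Proposition~\ref{pr:forfinal}, cited from~\cite{BCN:89}. Since the paper only uses it to finish Theorem~\ref{th:K333}, I plan a self-contained local-to-global argument tailored to this setting rather than the fully general one.

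\textbf{Setup.} Let $G$ be connected and locally $K_{m_1,\ldots,m_k}$. This means that for every vertex $v$ the graph $G(v)$ is complete multipartite. If some $G(v)$ has no edges, then $k\le 1$ at $v$. Connectivity together with the local isomorphism type then forces every neighbourhood to be edgeless, so $G$ is triangle-free. Hence we may assume $k\ge 2$.

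\textbf{Key step: non-adjacency is an equivalence relation.} Define $x\sim y$ if $x=y$ or $xy\notin E(G)$. I will show that $\sim$ is transitive, because then $G$ is complete multipartite with the equivalence classes as parts. Locally this is already true: inside $N(v)$, non-adjacency is exactly "same part", hence transitive. For the global version, take $x\not\sim y$, i.e.\ $xy\in E(G)$, and a vertex $z$ with $z\sim x$. We must show $zy\in E(G)$.

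First I would prove that any two non-adjacent vertices $x,z$ at distance $2$ have identical neighbourhoods. Pick a common neighbour $v$. In $G(v)$, the vertices $x$ and $z$ lie in the same part $P$. For any $w\in N(x)$, we want $w\in N(z)$. If $w\in N(v)$, then $w$ lies in a different part from $x$, hence from $z$, so $w$ is adjacent to $z$. If $w\notin N[v]$, I would look at $G(x)$, which contains $v$ and $w$ in different or equal parts. Since $v\not\sim w$ in $G(x)$, they lie in the same part there. The common neighbours of $x$ and $v$ are exactly $N(v)\setminus P$, which is nonempty because $k\ge 2$. In $G(x)$ the vertices $v$ and $w$ are twins, so $w$ is adjacent to all of $N(v)\setminus P$. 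Take $u\in N(v)\setminus P$. In $G(u)$ we have $x,z,w$ with $x\not\sim w$ and $x\sim z$; local transitivity in $G(u)$ gives $zw\in E(G)$. This twin claim is the main obstacle, and the step where $k\ge 2$ is used.

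With twins established, connectivity finishes the transitivity. Suppose $x\sim z$ and $xy\in E(G)$. Then $x$ and $z$ have the same neighbourhood. Indeed, connectivity plus the twin claim propagates along paths: in a connected graph each non-adjacent pair is either at distance $2$ or lies in a different component. The latter is impossible here, since a complete multipartite neighbourhood structure makes the diameter at most $2$ once twins are known. I would argue this by induction on distance. Hence $y\in N(z)$.

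\textbf{Finishing.} Now $G=K_{n_1,\ldots,n_t}$. For $v$ in part $i$, $G(v)$ is complete multipartite with parts of sizes $n_j$ for $j\ne i$. Since this is $K_{m_1,\ldots,m_k}$ for every $v$, the multisets $\{n_j\}_{j\ne i}$ coincide for all $i$. This forces all $n_j$ to be equal to some $m$, with $t=k+1$, giving $G\cong K_{(k+1)\times m}$.

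Applied to Theorem~\ref{th:K333}, Lemma~\ref{lemma:finallocat} gives locally $K_{3,\ldots,3}$ with $r_2/3$ parts. Therefore $3\mid r_2$, and $G\cong\operatorname{Turan}(r_2+3,\frac{r_2}{3}+1)$.
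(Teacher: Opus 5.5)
The paper gives no proof of this proposition. It is quoted from \cite{BCN:89} and used only in its ``locally $K_{m_1,\ldots,m_k}$'' form to finish Theorem~\ref{th:K333}. So your proposal is a self-contained replacement for the citation rather than a variant of a proof in the paper, and in substance it is correct.

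Your twin claim is right, including the delicate case $w\notin N[v]$. There $N(x)\cap N(v)=N(v)\setminus P$ is non-empty because $k\ge 2$. The twins $v,w$ of $G(x)$ then give some $u$ adjacent to $x$, $z$ and $w$, and transitivity inside $G(u)$ yields $zw\in E(G)$. You skip the trivial case $w=v$. The multiset argument at the end is also fine. Compared with the bare citation, your route shows exactly where $k\ge 2$ is used, and that the argument is purely local plus one connectivity step.

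Two steps need repair.

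\textbf{The step from the twin claim to transitivity.} As written it is wrong: ``in a connected graph each non-adjacent pair is either at distance $2$ or lies in a different component'' is false in general. No induction is needed; the correct argument is one line. Suppose $d(x,z)\ge 3$ and take a shortest path $x=p_0,p_1,p_2,p_3,\ldots$. Then $p_0,p_2$ are non-adjacent with common neighbour $p_1$, so $p_3\in N(p_2)=N(p_0)$, contradicting minimality. Hence every non-adjacent pair has a common neighbour, and the twin claim gives transitivity of $\sim$.

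\textbf{Varying local type.} You only handle constant local type. The first sentence of the statement allows $G(v)$ to be complete multipartite of varying shape, and there your justification (``the local isomorphism type forces every neighbourhood to be edgeless'') is unavailable. Use instead the following propagation argument. If $G(v)$ has at least two parts, every $u\in N(v)$ is adjacent to some $w\in N(v)$, so $vw$ is an edge of $G(u)$. By connectivity, either no $G(v)$ contains an edge, so $G$ is triangle-free, or every $G(v)$ has at least two parts. In the latter case your twin claim and the diameter argument go through unchanged. They only use that $G(v)$, $G(x)$, $G(u)$ are complete multipartite and that $G(v)$ has at least two parts. The constant local type is needed only for the equal-part-size conclusion.
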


We now formulate the final step needed to complete the proof.

\begin{lemma}\label{lemma:turan}
Let $G$ be a connected graph such that the neighbourhood of every vertex of $G$ is isomorphic to $\overline{k C_3}$, for some $k \in \mathbb{N}$. Then $G \cong \operatorname{Turan}(3k+3, k+1)$.
\end{lemma}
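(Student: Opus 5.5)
The plan is to reduce Lemma~\ref{lemma:turan} directly to Proposition~\ref{pr:forfinal}. The key point is that the complement of a disjoint union of $k$ triangles is a complete multipartite graph. Two vertices of $\overline{kC_3}$ are non-adjacent exactly when they lie in the same triangle of $kC_3$. Non-adjacency is therefore an equivalence relation whose classes are the $k$ triangles, each of size $3$. Every pair of vertices from different classes is adjacent, so $\overline{kC_3}\cong K_{3,3,\ldots,3}=K_{k\times 3}$. Hence the hypothesis says that $G$ is locally $K_{k\times 3}$, and in particular locally complete multipartite.

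Next I apply Proposition~\ref{pr:forfinal}, using that $G$ is connected. It says $G$ is either triangle-free or complete multipartite. The triangle-free case must be excluded. If $k\ge 2$, the neighbourhood $\overline{kC_3}$ contains an edge between two different triangles, which gives a triangle in $G$. If $k=1$, then $\overline{C_3}$ is edgeless, so $G$ is locally $3K_1$. This is the degenerate case $K_{1\times 3}$, and the "in particular" clause of the proposition still applies with $m_1=3$.

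So in all cases the "in particular" part of Proposition~\ref{pr:forfinal} applies with $m_1=\dots=m_k=3$, giving $G\cong K_{(k+1)\times 3}$. This is the complete $(k+1)$-partite graph with parts of size $3$ on $3k+3$ vertices, which is exactly $\operatorname{Turan}(3k+3,k+1)$.

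The main obstacle is the $k=1$ case. There $G$ is locally $3K_1$, i.e. a triangle-free cubic graph, and need not be $K_{3,3}$; for instance the cube $Q_3$ is locally $3K_1$. So for $k=1$ the lemma as stated appears false unless one reads the cited proposition's "in particular" literally. I would flag this and note that in the application to Theorem~\ref{th:K333} we have $r_2=3k\ge 7$, so $k\ge 3$. For those $k$ the neighbourhood contains edges and the argument above is sound.

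To avoid relying entirely on the citation, I would also sketch a direct argument for $k\ge 2$. Call vertices $x,y$ equivalent if $N(x)=N(y)$. The local structure shows that two non-adjacent vertices at distance $2$ have the same neighbourhood. Adjacency between classes, together with connectivity, then forces complete multipartiteness with $k+1$ classes of size $3$.
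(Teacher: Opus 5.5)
Your main route is the paper's own. Its proof is the single observation that Proposition~\ref{pr:forfinal}, applied to $G$ (which is locally $\overline{kC_3}\cong K_{k\times 3}$), gives $G\cong K_{(k+1)\times 3}=\operatorname{Turan}(3k+3,k+1)$. For $k\ge 2$ your version is correct and more careful than the paper's, because you check that $G$ contains a triangle and therefore falls under the complete multipartite alternative.

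Your $k=1$ objection is also correct, and it is a defect of the lemma as stated rather than of your argument. The cube $Q_3$ (or the Petersen graph) is connected and locally $\overline{C_3}=3K_1$, yet it is not $K_{3,3}=\operatorname{Turan}(6,2)$. So the ``in particular'' clause of Proposition~\ref{pr:forfinal} as quoted needs at least two parts, and the lemma should assume $k\ge 2$. This does no harm to Theorem~\ref{th:K333}, where $k=r_2/3\ge 3$.

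The one thing to fix is that your proposal contradicts itself. Remove the claims in your second and third paragraphs that for $k=1$ the ``in particular'' clause ``still applies with $m_1=3$'' and that it applies ``in all cases''. No reading of that clause rescues $k=1$: taken literally it is simply false there, as your own $Q_3$ example shows.

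Your direct alternative for $k\ge 2$ does go through, and it would make the lemma independent of the citation.
\begin{itemize}
\item Let $x\not\sim y$ have a common neighbour $z$. Write $N(x)=\{z,z',z''\}\cup S$, where $S=N(x)\cap N(z)=N(y)\cap N(z)$ is nonempty and $z',z''$ are the non-neighbours of $z$ in $G(x)$.
\item For any $s\in S$, the local graph $G(s)$ puts $x,y$ in one part and $z,z'$ (likewise $z,z''$) in another part. Hence $z',z''\sim y$, and $N(x)=N(y)$ by counting.
\item A shortest path $v_0v_1v_2v_3$ would then give $v_3\in N(v_2)=N(v_0)$, a contradiction. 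So $G$ has diameter at most $2$.
\item Therefore non-adjacent vertices have equal neighbourhoods, and $G$ is complete multipartite.
\end{itemize}
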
 
\begin{proof}
    Using Proposition~\ref{pr:forfinal}, we readily see that $G\cong K_{(k+1)\times 3}$ which is $\operatorname{Turan}(3k+3, k+1)$.
\end{proof}

Combining Lemmas~\ref{lemma:finallocat} and~\ref{lemma:turan} completes the proof of Theorem~\ref{th:K333}. This immediately implies the following corollary.

Combining Lemmas~\ref{lemma:finallocat} and \ref{lemma:turan} directly concludes the proof of Theorem~\ref{th:K333}, which in turn implies the following corollary.

\begin{corollary}\label{cor:main}
   $\operatorname{Turan}(3(m+1), m+1)$, $m\geq 3$ is the unique connected graph with parameters $\left(m, \frac{m(m-3)}{2}\right)$.
\end{corollary}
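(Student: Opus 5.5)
The plan is to obtain the corollary as a direct specialisation of Theorem~\ref{th:K333}, after reading the parameters correctly. Since $\operatorname{Turan}(3(m+1), m+1)$ has $m+1$ parts of size $3$, the formulas from Section~\ref{sect-defs-prelim} give
\[
r_2=\frac{3(m+1)}{m+1}\cdot m = 3m,\qquad r_3=\binom{m}{2}\cdot 3^2=\frac{9m(m-1)}{2}=\frac{3m(3m-3)}{2}.
\]
So I will read the parameter pair in the statement as $\left(r_2,\frac{r_2(r_2-3)}{2}\right)$ with $r_2=3m$. This matches the form used in Theorem~\ref{th:K333}, and it is the only reading under which $\operatorname{Turan}(3(m+1),m+1)$ actually has the stated parameters.

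First I will check the hypotheses of Theorem~\ref{th:K333}. From $m\ge 3$ we get $r_2=3m\ge 9\ge 7$, and clearly $3\mid r_2$. The existence-and-uniqueness branch of Theorem~\ref{th:K333} then states that the unique connected graph with parameters $\left(r_2,\frac{r_2(r_2-3)}{2}\right)$ is $\operatorname{Turan}\left(r_2+3,\frac{r_2}{3}+1\right)$. Substituting $r_2=3m$ gives $\operatorname{Turan}(3m+3,m+1)=\operatorname{Turan}(3(m+1),m+1)$, which is the claim.

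For completeness I will recall how the theorem's proof runs in this case. Lemma~\ref{lemma:finallocat} shows that every neighbourhood $G(v)$ is the complement of a disjoint union of triangles on $r_2=3m$ vertices, that is, $\overline{mC_3}$. Lemma~\ref{lemma:turan}, applied with $k=m$, then identifies $G$ with $\operatorname{Turan}(3m+3,m+1)$. Conversely, the parameter computation above shows that this Tur\'an graph does have the required parameters, which gives existence.

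There is no real obstacle here. The only point needing care is the bookkeeping that reconciles the statement's parameters with $r_2=3m$, and confirming that the bound $m\ge 3$ is exactly what makes the threshold $r_2\ge 7$ of Theorem~\ref{th:K333} hold. For $m=2$ we would have $r_2=6$, which the theorem does not cover.
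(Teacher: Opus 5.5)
Your proposal is correct and follows the paper's own route: the paper also gets this corollary by specialising Theorem~\ref{th:K333} to $r_2=3m$, so that $m\ge 3$ gives $r_2\ge 9\ge 7$ and $3\mid r_2$. You were also right to read the parameter pair as $\left(3m,\frac{3m(3m-3)}{2}\right)$. As printed, $\left(m,\frac{m(m-3)}{2}\right)$ cannot be the parameters of $\operatorname{Turan}(3(m+1),m+1)$, which has degree $3m$, so the statement contains a typo, and your corrected version is what the paper's argument actually proves.
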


Finally, it is worth mentioning an interesting conjecture from~\cite{SheffieldXi:25} stating whenever a graph with parameters $(r_2, r_3)$ exists, there also exists an abelian Cayley graph with the same parameters $(r_2, r_3)$. The results obtained in this section are consistent with this conjecture, since the Tur\'an graphs considered above are abelian Cayley graphs.

\section{Open questions}\label{sect-4}

In this section, we present several open questions about regular $K_3$-regular graphs for further research.

Note that the bounds from Corollaries~\ref{cor:5.1} and~\ref{cor:5.2} are insufficient for the complete characterisation of the parameters for the existence of regular $K_3$-regular graphs. For example, the case $r_2 = 7$, $r_3 = 14$ is not covered by Corollaries~\ref{cor:5.1} and~\ref{cor:5.2}, yet there are no such graphs (by Theorem~\ref{th:K333}). The cases $(8,17)$, $(9,17)$, $(9,23)$, $(10,23)$ are the smallest pairs for which we do not know whether such graphs exist. This begs the first two questions.
	
\textbf{Question 1.} Provide a complete list of the values of parameters for which there exist regular $K_3$-regular graphs.

\textbf{Question 2.} For an admissible pair $(r_2,r_3)$, characterize all connected regular $K_3$-regular graphs with these parameters.

Now recall that Theorems~\ref{cor:5.3} and~\ref{cor:main} give a criterion for $\operatorname{Turan}(2m+2, m+1)$ and $\operatorname{Turan}(3m+3, m+1).$
            
\textbf{Question 3.} Can one characterize graphs $\operatorname{Turan}(n, r)$ in a similar manner as it presented in Corollary~\ref{cor:5.3} for particular Tur\'an graphs?

\section*{Acknowledgments} 
	The authors are deeply grateful to the Ukrainian Armed Forces for keeping Leliukhivka, Kyiv, Kharkiv, and Khmelnytskyi safe, which gave us the opportunity to work on this paper. We are grateful to Helmut Ruhland for inspiring discussions that motivated this work. We also thank Vyacheslav Boyko for helpful comments that improved the presentation, and Andrii Serdiuk for a~careful reading and insightful remarks on the mathematical content. Artem Hak has been supported by EDUFI (TFK programme, 12/221/2023).

\newpage

\appendix
\refstepcounter{section}
\section*{Appendix~\thesection}\label{appendix}

\newcommand{\Expand}{\cellcolor{green!25}\ensuremath{\rightarrow}}

\begin{table}[!ht]
	\caption{Admissible parameters for regular $K_3$-regular graphs. The table is split into two parts for readability. The left part of the second block is omitted since all corresponding entries are $\mathrm{No}(1)$.}
	\label{tab:admissible}
	\renewcommand{\arraystretch}{1.0}
	\begin{tabular}{|c|c|c|c|c|c|c|c|c|c|}
		\hline
		$r_3 \setminus r_2$ & $2$ & $3$ & $4$ & $5$ & $6$ & $7$ & $8$ \\ \hline
		$1$ & \cellcolor{green!25}$K_3$ & \Expand & \Expand & \Expand & \Expand & \Expand & \Expand \\ \hline
		$2$ & \No{1} & \No{3} & \cellcolor{green!25}$K_3 \square K_3$ & \Expand & \Expand & \Expand & \Expand \\ \hline
		$3$ & \No{1} & \cellcolor{green!25}$K_4$ & \Expand & \Expand & \Expand & \Expand & \Expand \\ \hline
		$4$ & \No{1} & \No{1} & \cellcolor{green!25}$\operatorname{Turan}(6, 3)$ & \Expand & \Expand & \Expand & \Expand \\ \hline
		$5$ & \No{1} & \No{1} & \No{2} & \cellcolor{green!25}$G_1$ & \cellcolor{green!25}$\operatorname{Turan}(6, 3) \square K_3$ & \Expand & \Expand \\ \hline
		$6$ & \No{1} & \No{1} & \cellcolor{green!25}$K_5$ & \Expand & \Expand & \Expand & \Expand \\ \hline
		$7$ & \No{1} & \No{1} & \No{1} & \No{3} & \cellcolor{green!25}$K_5 \square K_3$ & \Expand & \Expand \\ \hline
		$8$ & \No{1} & \No{1} & \No{1} & \No{3} & \cellcolor{green!25}$G_2$ & \Expand & \cellcolor{green!25}$K_5 \square K_3 \square K_3$ \\ \hline
		$9$ & \No{1} & \No{1} & \No{1} & \No{3} & \cellcolor{green!25}$G_3$ & \cellcolor{green!25}$K_5 \square  K_4$ & \Expand \\ \hline
		$10$ & \No{1} & \No{1} & \No{1} & \cellcolor{green!25}$K_6$ & \Expand & \Expand & \Expand \\ \hline
		$11$ & \No{1} & \No{1} & \No{1} & \No{1} & \No{2} & \cellcolor{green!25}$K_6 \square K_3$ & \Expand \\ \hline
		$12$ & \No{1} & \No{1} & \No{1} & \No{1} & \cellcolor{green!25}$\operatorname{Turan}(8,4) $& \Expand & \Expand \\ \hline
		$13$ & \No{1} & \No{1} & \No{1} & \No{1} & \No{2} & \cellcolor{green!25}$G_4$ & \cellcolor{green!25}$K_6 \square K_4$ \\ \hline
		$14$ & \No{1} & \No{1} & \No{1} & \No{1} & \No{2} & \No{4} & \cellcolor{green!25}$G_5$ \\ \hline
		$15$ & \No{1} & \No{1} & \No{1} & \No{1} & \cellcolor{green!25}$K_7$ & \Expand & \Expand \\ \hline
	\end{tabular}

	\vspace{1.0cm}
	
	\setlength{\tabcolsep}{1.6pt}
	\begin{tabular}{|c|c|c|c|c|c|c|c|c|c|}
		\hline
		$r_3 \setminus r_2$ & 6 & 7 & 8 & 9 & 10 & 11 & 12 \\ \hline
		$15$ & \cellcolor{green!25}$K_7$ & \Expand & \Expand & \Expand & \Expand & \Expand & \Expand \\ \hline
		$16$ & \No{1} & \No{3} & \cellcolor{green!25}$\operatorname{Turan}(12,3) $ & \Expand & \Expand & \Expand & \Expand \\ \hline
		$17$ & \No{1} & \No{3} & \textit{Unknown} & \textit{Unknown} & \cellcolor{green!25}$\operatorname{Turan}(12,3) \square K_3$ & \Expand & \Expand \\ \hline
		$18$ & \No{1} & \No{3} & \cellcolor{green!25}$C^{1, 2, 3, 4}_{13}$ & \Expand & \Expand & \Expand & \Expand \\ \hline
		$19$ & \No{1} & \No{3} & \cellcolor{green!25}$C^{1, 2, 3, 4}_{12}$ & \Expand & \Expand & \Expand & \Expand \\ \hline
		$20$ & \No{1} & \No{3} & \No{4} & \cellcolor{green!25}$G_6$ & \Expand & \Expand & \Expand \\ \hline
		$21$ & \No{1} & \cellcolor{green!25}$K_8$ & \Expand & \Expand & \Expand & \Expand & \Expand \\ \hline
		$22$ & \No{1} & \No{1} & \No{2} & \Expand & \Expand & \Expand & \Expand \\ \hline
		$23$ & \No{1} & \No{1} & \No{2} & \textit{Unknown} & \textit{Unknown} & \Expand & \Expand \\ \hline
		$24$ & \No{1} & \No{1} & \cellcolor{green!25}$\operatorname{Turan}(10,5)$ & \Expand & \Expand & \Expand & \Expand \\ \hline
		$25$ & \No{1} & \No{1} & \No{2} & \textit{Unknown} & \Expand & \Expand & \Expand \\ \hline
		$26$ & \No{1} & \No{1} & \No{2} & \textit{Unknown} & \textit{Unknown} & \textit{Unknown} & \cellcolor{green!25}$\operatorname{Turan}(15,3) \square K_3$ \\ \hline
		$27$ & \No{1} & \No{1} & \No{2} & \cellcolor{green!25} $\operatorname{Turan}(12,4)$ & \Expand & \Expand & \Expand \\ \hline
		$28$ & \No{1} & \No{1} & \cellcolor{green!25}$K_9$ & \Expand & \Expand & \Expand & \Expand \\ \hline
	\end{tabular}
\end{table}

\medskip
\noindent
\textbf{Legend for Table~\ref{tab:admissible}.}
\begin{itemize}
	\item \No{1}: Lemma~\ref{K3_upper_bound};
	\item \No{2}: Corollary~\ref{cor:5.1};
	\item \No{3}: Corollary~\ref{cor:5.2};
	\item \No{4}: Theorem~\ref{th:K333};
	\item $C_n^{s_1, \ldots, s_k}$ denotes a circulant graph with jumps $s_1, \ldots, s_k$ on $n$ vertices;
	\item $\rightarrow$: blow-up construction, i.e., $G \square K_2$ applied to the graph immediately to the left.
\end{itemize}


The graph $G_1$ is an icosahedron, and $G_2$, $G_3$, $G_4$, $G_5$ and $G_6$ are depicted in Figure~\ref{fig:2x2}. Moreover, one can find adjacency data for these graphs in \href{https://github.com/DenisLohvynov/Regular-K3-regular-graphs-examples}{this GitHub repository}.

We use the heuristic search framework introduced in~\cite{Hak:25}. In the present work, we rely on the same code. The search starts from an arbitrary $r_2$-regular graph and applies a standard $2$-switch mutation, which preserves regularity.

To guide the search towards $K_3$-regularity, we use the following fitness function. Let $G$ be an $r_2$-regular graph on $n$ vertices. For a prescribed value $r_3$, we define
\[
\operatorname{fitness}(G) = \frac{1}{n}
\sum_{v \in V(G)} \frac{1}{ \left( \lvert K_3 \deg(v) - r_3 \rvert + 1 \right)^2 }.
\]

Thus, graphs whose vertex $K_3$-degrees are closer to $r_3$ receive higher fitness values; the maximum is attained precisely when $G$ is $K_3$-regular.

\begin{figure}[htbp]
	\centering
	\begin{subfigure}{0.45\textwidth}
		\centering
		\begin{tikzpicture}[
			scale=3,
			every node/.style={circle, draw, fill=black, inner sep=1.8pt},
			every edge/.style={draw, thick}
			]
			
			\foreach \i in {0,...,17} {
				\node (v\i) at ({360*\i/18}:1) {};
			}
			
			\foreach \u/\v in {
				0/1,0/2,0/3,0/13,0/14,0/16,
				1/2,1/3,1/14,1/15,1/16,
				2/3,2/4,2/5,2/7,
				3/4,3/5,3/6,
				4/5,4/6,4/7,4/11,
				5/6,5/7,5/14,
				6/7,6/8,6/9,
				7/8,7/9,
				8/9,8/10,8/11,8/17,
				9/11,9/12,9/17,
				10/11,10/12,10/13,10/15,10/17,
				11/12,11/17,
				12/13,12/15,12/17,
				13/14,13/15,13/16,
				14/15,14/16,
				15/16,
				16/17%
			}{
				\draw (v\u) -- (v\v);
			}
			
		\end{tikzpicture}
		\caption{The graph $G_2$ with parameters $(6,8)$.}
		\label{fig:G2}
		
	\end{subfigure}
	\hfill
	\begin{subfigure}{0.45\textwidth}
		\centering
		\begin{tikzpicture}[
			scale=3,
			every node/.style={circle, draw, fill=black, inner sep=1.8pt},
			every edge/.style={draw, thick}
			]
			
			\foreach \i in {0,...,9} {
				\node (v\i) at ({360*\i/10}:1) {};
			}
			
			\foreach \u/\v in {
				0/1,0/2,0/3,0/4,0/7,0/8,
				1/3,1/4,1/6,1/7,1/8,
				2/3,2/4,2/5,2/7,2/9,
				3/4,3/5,3/9,
				4/6,4/9,
				5/6,5/7,5/8,5/9,
				6/7,6/8,6/9,
				7/8,
				8/9%
			}{
				\draw (v\u) -- (v\v);
			}
			
		\end{tikzpicture}
		\caption{The graph $G_3$ with parameters $(6,9)$.}
		\label{fig:G3}
	\end{subfigure}
	
	\medskip
	
	\begin{subfigure}{0.45\textwidth}
		\centering
		\begin{tikzpicture}[
			scale=3,
			every node/.style={circle, draw, fill=black, inner sep=1.8pt},
			every edge/.style={draw, thick}
			]
			
			\foreach \i in {0,...,11} {
				\node (v\i) at ({360*\i/12}:1) {};
			}
			
			\foreach \u/\v in {
				0/1,0/2,0/4,0/5,0/6,0/7,0/11,%
				1/2,1/3,1/5,1/6,1/7,1/8,%
				2/3,2/4,2/7,2/8,2/9,%
				3/4,3/5,3/8,3/9,3/10,%
				4/5,4/9,4/10,4/11,%
				5/6,5/10,5/11,%
				6/7,6/8,6/10,6/11,%
				7/8,7/9,7/11,%
				8/9,8/10,%
				9/10,9/11,%
				10/11%
			}{
				\draw (v\u) -- (v\v);
			}
			
		\end{tikzpicture}
		\caption{The graph $G_4$ with parameters $(7,13)$.}
		\label{fig:G4}
	\end{subfigure}
	\hfill
	\begin{subfigure}{0.45\textwidth}
		\centering
		\begin{tikzpicture}[
			scale=3,
			every node/.style={circle, draw, fill=black, inner sep=1.8pt},
			every edge/.style={draw, thick}
			]
			
			\foreach \i in {0,...,14} {
				\node (v\i) at ({360*\i/15}:1) {};
			}
			
			\foreach \u/\v in {
				0/1,0/2,0/4,0/5,0/6,0/7,0/12,0/14,
				1/2,1/5,1/8,1/9,1/12,1/13,1/14,
				2/5,2/6,2/10,2/11,2/12,2/13,
				3/4,3/7,3/9,3/10,3/11,3/12,3/13,3/14,
				4/5,4/6,4/8,4/10,4/12,4/14,
				5/8,5/9,5/10,5/11,
				6/7,6/9,6/11,6/13,6/14,
				7/9,7/10,7/11,7/12,7/13,
				8/9,8/10,8/11,8/12,8/14,
				9/11,9/14,
				10/12,10/13,
				11/13,
				13/14%
			}{
				\draw (v\u) -- (v\v);
			}
			
		\end{tikzpicture}
		\caption{The graph $G_5$ with parameters $(8,14)$.}
		\label{fig:G5}
		
	\end{subfigure}
	\medskip

	\begin{subfigure}{1\textwidth}
		\centering
		\begin{tikzpicture}[
			scale=1,
			every node/.style={circle, draw, fill=black, inner sep=1.8pt},
			every edge/.style={draw, thick}
			]
			
			\foreach \i in {0,...,17} {
				\node (v\i) at ({360*\i/18}:3.5) {};
			}
			
			\foreach \u/\v in {
				0/3,0/7,0/8,0/10,0/11,0/13,0/14,0/16,0/17,1/4,1/6,1/8,1/9,1/11,1/12,1/14,1/15,1/17,2/5,2/6,2/7,2/9,2/10,2/12,2/13,2/15,2/16,3/7,3/8,3/10,3/11,3/13,3/14,3/16,3/17,4/6,4/8,4/9,4/11,4/12,4/14,4/15,4/17,5/6,5/7,5/9,5/10,5/12,5/13,5/15,5/16,6/9,6/13,6/14,6/16,6/17,7/10,7/12,7/14,7/15,7/17,8/11,8/12,8/13,8/15,8/16,9/13,9/14,9/16,9/17,10/12,10/14,10/15,10/17,11/12,11/13,11/15,11/16,12/15,13/16,14/17%
			}{
				\draw (v\u) -- (v\v);
			}
		\end{tikzpicture}
		
		\caption{The graph $G_6$ with parameters $(9,20)$.}
		\label{fig:G6}
		
	\end{subfigure}
	
	\caption{The graphs $G_2$, $G_3$, $G_4$ and $G_5$ from Table~\ref{tab:admissible}.}
	\label{fig:2x2}
\end{figure}
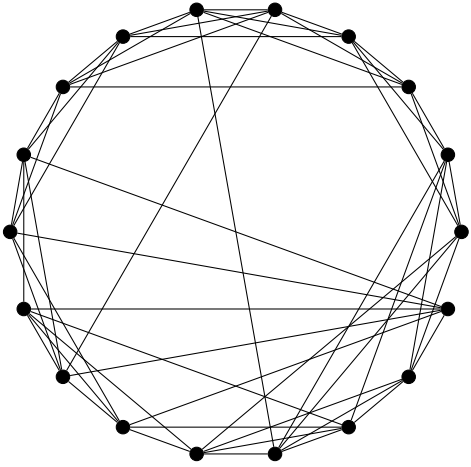
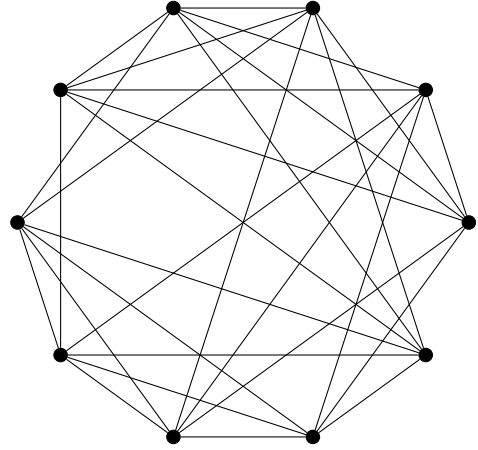
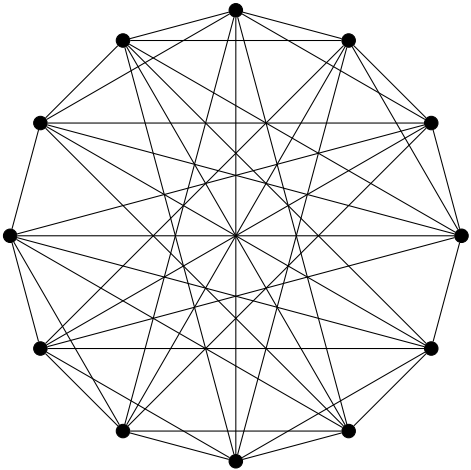
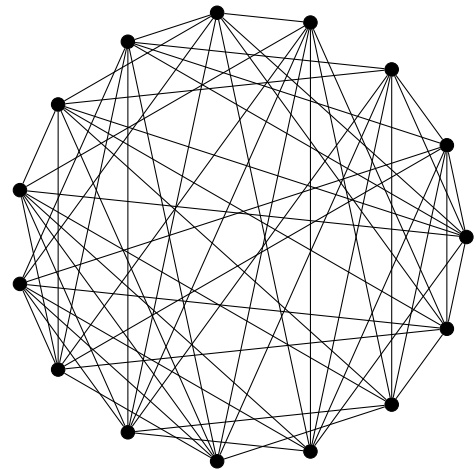

\end{document}